\documentclass[11pt]{article}
\usepackage[utf8]{inputenc}
\usepackage{amsthm}
\usepackage{amssymb}

\usepackage{enumerate}
\usepackage{cancel}
\usepackage{mathrsfs}
\usepackage{tikz}
\usepackage{graphicx}
\usepackage{gensymb}
\usetikzlibrary[shapes.geometric]
\usepackage{tabularx}
\usepackage{hyperref}
\usepackage{setspace}
\usepackage[top=1in, left=1in, right=1in, bottom=1in, footskip=0.5in]{geometry}
\usepackage{scrextend}
\usepackage{lipsum}
\usepackage{caption}
\usepackage{comment}
\usepackage{fancyhdr}
\usepackage{float}
\pagestyle{fancy}
\fancypagestyle{plain}{
\fancyhf{}
\fancyfoot[C]{\large\thepage}

}
\pagestyle{plain}
\usepackage{xcolor}
\usepackage{pgffor}
\usepackage{graphbox}
\usepackage{soul}
\usepackage{color}
\usepackage{multirow}%
\usepackage{amsfonts}%
\usepackage[title]{appendix}%
\usepackage{textcomp}%
\usepackage{manyfoot}%
\usepackage{booktabs}%
\usepackage{algorithm}%
\usepackage{algorithmicx}%
\usepackage{algpseudocode}%
\usepackage{listings}%
\usepackage{subcaption}
\usepackage[T1]{fontenc}
\usepackage{array}
\usepackage[export]{adjustbox}

\usepackage{leftindex}

\usetikzlibrary{calc} 
\usetikzlibrary{external,quotes,chains}
\tikzset{vertex/.style={black,fill,draw,minimum size=6pt,inner sep=0pt,circle,thin},bold/.style={black,line width=0.6mm},plain/.style={black,thin},bold edges/.style=bold,plain edges/.style=plain,label distance=1mm,text node/.style={rectangle,fill=none,draw=none},every label/.style=text node,caption node/.style={text node,font=\Large}}

\newtheorem{theorem}{Theorem}[section]
    \newtheorem{lemma}[theorem]{Lemma}
    \newtheorem{corollary}[theorem]{Corollary}
    
    \newtheorem{observation}[theorem]{Observation}

    \newtheorem{property}[theorem]{Property}

    \newtheorem*{claim*}{Claim}
\theoremstyle{definition}
\newtheorem{definition}[theorem]{Definition}

\newcommand{\lv}[1]{\bar{\mathbf{#1}}} 
\newcommand{\wt}[1]{\mathbf{wt}_{\overrightarrow{#1}}} 
\newcommand{\imb}[1]{\mathbf{imb}_{\overrightarrow{#1}}} 

\newcommand{\ar}[1]{\overset{\rightarrow}{#1}}
\newcommand{\AR}[1]{\overrightarrow{#1}}

\title{New Results on Difference Distance Magic Labelings}
\author{Roza Aceska\thanks{Department of Mathematical Sciences, Ball State University, Muncie, IN, USA
(raceska@bsu.edu).}\and Niny Arcila-Maya\thanks{Department of Mathematics, San Francisco State University, San Francisco, CA, USA (niny.arcilamaya@sfsu.edu).}\and Joshua Carlson\thanks{Department of Mathematics and Computer Science, Drake University, Des Moines, IA, USA (joshua.carlson@drake.edu).}\and Alison Marr\thanks{Department of Mathematics and Computer Science, Southwestern University, Georgetown, TX, USA
(marra@southwestern.edu).}\and Miriam Parnes\thanks{Department of Mathematics, Towson University, Towson, MD, USA (mparnes@towson.edu).}\and Kathleen Ryan\thanks{Department of Mathematics and Computer Science, DeSales University, Center Valley, PA, USA (kathleen.ryan@desales.edu).}\and Houston Schuerger\thanks{Department of Mathematics, Tarleton State University, Stephenville, TX, USA
(hschuerger@tarleton.edu).}\and Jennifer F. Vasquez\thanks{Department of Mathematics, University of Scranton, Scranton, PA, USA (jennifer.vasquez@scranton.edu).}}

\begin{document}

\maketitle

\begin{abstract}A graph labeling assigns values to the components of a graph (vertices, edges, etc.).  
In particular, distance magic labelings have been widely studied in undirected graphs. In such a labeling, the vertices are labeled with unique values from one up to the number of vertices so that the sum of labels on the neighbors of any vertex is the same across all vertices. For oriented graphs, a related concept of distance difference magic has been studied. In a distance difference magic labeling, each vertex is given a unique value from one up to the number of vertices such that for each vertex the sums of the labels of vertices in the in-neighborhood minus the sums of the labels of vertices in the out-neighborhood equals zero.  In this paper, we expand on this concept by showing a connected difference distance magic oriented graph on $n$ vertices exists for each integer $n \geq 5$. We also construct arbitrarily large difference distance magic oriented graphs from smaller ones using a new graph sum and exhibit a connection between linear algebra and this type of labeling.
\end{abstract}

\noindent {\bf Keywords:} graph labeling, oriented graph, difference distance magic, weighted graph sum

\section{Introduction}

A graph labeling assigns values to the components of a graph (vertices, edges, etc.). Often we aim to assign these values so that certain conditions hold. There are a variety of different named graph labelings and Gallian's survey is one source of information to learn about what has been done in the field \cite{Gal}.  In particular, distance magic labelings have been widely studied in undirected graphs. Here the vertices are labeled with unique values from one up to the number of vertices so
that the sum of labels on the neighbors of any vertex is the same across all vertices. In \cite{FM}, difference distance magic labelings are defined for oriented graphs. In such a labeling, the vertices are labeled with unique values from one up to the number of vertices so that for each vertex the sum of the labels of vertices in the in-neighborhood minus the sum of the labels of vertices in the out-neighborhood equals zero.

Consider an oriented graph $\AR{G}$ on  $n$ vertices. Let $f$ be a bijective labeling of the vertices of $\AR{G}$. If at every vertex $v$ of $\AR{G}$, the difference of the sum of the labels from the in-neighborhood of $v$ minus that of the out-neighborhood of $v$ is equal to zero (i.e., if the sum of the labels from the in-neighborhood is equal to the sum of the labels of the out-neighborhood), then we say that $f$ is a difference distance magic (DDM) labeling of $\AR{G}$. If a graph $G$ can be oriented so that a DDM labeling exists, then we say that $G$ is difference distance magic orientable (DDMO).

In this paper, we expand on this work.  In Section \ref{def}, we include the definitions of common graph theory terminology for the reader's reference.  In Section \ref{Imbalance}, we introduce the concept of imbalance which plays a key role in being able to construct larger DDMOG from smaller ones, exhibit the connection between a graph's imbalance and properties of its adjacency matrices, and identify a process for taking a DDMOG of imbalance 1 and from it producing a graph with imbalance 0 by adding a single additional vertex.  In Section \ref{arbitrarily_large}, we consider a vertex gluing technique known  as vertex coalescence \cite{coal} and show that by utilizing this technique, for each positive integer $n \geq 5$ one can construct a connected DDMOG on $n$ vertices.  In Section \ref{operations}, we consider methods for constructing larger disconnected DDMOGs. We also introduce a new  weighted graph sum  as a tool for yielding connected DDMOGs from two smaller oriented graphs.


\section{Definitions}\label{def}

A \emph{(simple) graph} $G$ is defined to be a pair $(V, E)$, consisting of a set of \emph{vertices} $V=V(G)$ and a set $E=E(G)$ of pairs of distinct vertices, called \emph{edges}.  If $u, v \in V(G)$ and $uv \in E(G)$, then we say $u$ and $v$ are \emph{neighbors} (or are \emph{adjacent}) and that the edge $uv$ is \emph{incident} to $u$ and $v$ (and is illustrated in diagrams by $u \bullet$---$\bullet v$).  The \emph{neighborhood} of a vertex $v \in V(G)$ is the set of the neighbors of $v$ and is denoted $N_G(v)$. The \emph{closed neighborhood} of a vertex $v$ is the set $N_G(v) \cup \{v\}$ and is denoted $N_G[v]$.  The \emph{degree} of $v$ is $|N_G(v)|$. In either notation, if the graph $G$ is clear, the subscript is not included.

A \emph{digraph (directed graph)} $D$ is defined to be a pair $(V,E)$, consisting of a set of \emph{vertices} $V(D)$ and a collection of ordered pairs of vertices $E(D)$, called \emph{edges}. If $u, v \in V(D)$ and $(u,v) \in E(D)$, then we say $u$ and $v$ are  \emph{neighbors} (or are \emph{adjacent}) and that the edge $(u,v)$ is \emph{incident} to $u$ and $v$, and is illustrated in diagrams by $u\bullet$ \hspace{-0.1in} $\longrightarrow$ \hspace{-0.1in} $\bullet v$. When clarity is not at stake, we at times denote the directed edge $(u, v)$ more simply as $uv$. Note that because order is important in digraphs, the edge $uv$ is different from the edge $vu$. The \emph{in-neighborhood} of a vertex $v$ is the set of vertices $u \in V(D)$ such that $(u,v) \in E(D)$ and is denoted $N^+(v)$.  The \emph{out-neighborhood} of a vertex $v$ is the set of vertices $u \in V(D)$ such that $(v,u) \in E(D)$ and is denoted $N^-(v)$. When necessary to avoid confusion, we use $N_D^+(v)$ and $N_D^-(v)$ to denote the in- and out-neighborhoods of $v$, respectively, in a specified digraph $D$.

An \emph{oriented graph} $\AR{G}$ is a digraph such that $uv \in E(\AR{G})$ implies $vu \not \in E(\AR{G})$. The \emph{underlying graph} $G$ of an oriented graph $\AR{G}$ is the graph that results when the orientations of the edges are removed from $\AR{G}$. If the underlying graph of an oriented graph $\AR{G}$ is $G$, then we say that $\AR{G}$ is an \emph{orientation} of $G$. Notationally speaking, we reserve $\AR{G}$ for oriented graphs and $D$ for digraphs. 

A function $f: V(G) \rightarrow S$, for some set $S$, is called a \emph{vertex labeling}.  In this paper, every labeling we will consider is a vertex labeling, so for the sake of brevity, we refer to them simply as \emph{labelings}. 

\begin{definition}\label{defn:DDM}
 Let $\overrightarrow{G}$ be an oriented graph on $n$ vertices. Let $f: V(\overrightarrow{G}) \rightarrow \mathbb{Z}^+$ be a labeling of $\overrightarrow{G}$.  Then the \textbf{weight} of a vertex $v \in V(\overrightarrow{G})$, denoted $wt(v)$, is 
 $$wt(v):=\sum_{u \in N^+(v)} f(u)-\sum_{u \in N^-(v)} f(u).$$ If the labeling function we are working with is not clear, we use $wt_f(v)$ to denote the weight of the vertex $v$ with labeling $f$.
 Moreover, if $f$ is a bijection onto $\{1,2,\dots,n\}$ such that every vertex has weight 0, then $f$ is a \textbf{difference distance magic (DDM) labeling} of $\overrightarrow{G}$.\label{defn:vertexWeight}
\end{definition}

If an oriented graph $\AR{G}$ has a DDM labeling then $\AR{G}$ is a \textit{difference distance magic oriented graph (DDMOG)}. If there exists an orientation of a graph $G$ that is a DDMOG, $G$ is \textit{difference distance magic orientable (DDMO)}. Note that not every graph has an orientation that is difference distance magic.   In particular, by Property~\ref{property1}, any graph with minimum degree one or two is not DDMO. 
\begin{property}\label{property1}
 \cite{FM} If $\AR{G}$ is a connected, non-trivial DDMOG, then every vertex is incident to a minimum of
three arcs, not all directed into or away from the vertex. 
\end{property}
In addition, not every labeling of a graph that is DDMO is a DDM labeling. Similarly, not every orientation of a DDMO graph has a DDM labeling.  Figure \ref{fig:3versionsW4} shows a DDM labeling of an oriented wheel ($\overrightarrow{W_4}$), a labeling of $\overrightarrow{W_4}$ that is not a DDM labeling, and an orientation of $W_4$ that has no DDM labeling.

\begin{figure}[h]
    \centering
    \begin{subfigure}{0.29\textwidth}
        \centering
        \begin{tikzpicture}
        [scale=0.67, node distance=1.3cm,base node/.style={circle,draw,minimum size=21pt}]
            \node[base node, very thick, label={[text=red]above left:0}] (5){5};
            \node[base node, very thick, label={[text=red] above left:0}][left of=5] (3){3};
            \node[base node, very thick, label={[text=red]above:0}][above of=5] (1){1};
            \node[base node, very thick, label={[text=red]below:0}][below of=5] (4){4};
            \node[base node, very thick, label={[text=red]above left:0}][right of=5] (2){2};
            \path [->, very thick] (3) edge node[left] {} (5);
            \path [<-, very thick] (5) edge node[right] {} (2);
            \path [->, very thick](1) edge[bend right=50] node[left] {} (3);
            \path [->, very thick](4) edge[bend left=50] node[left] {} (3);
            \path [->, very thick](4) edge[bend right=50] node[left] {} (2);
            \path [->, very thick](1) edge[bend left=50] node[left] {} (2);
            \path [->, very thick] (5) edge node[left] {} (1);
            \path [->, very thick] (5) edge node[left] {} (4);
        \end{tikzpicture}
        \caption{A DDM labeling } \label{$W_4$ figure}
    \end{subfigure}
    \begin{subfigure}{0.3\textwidth}
        \centering
        \begin{tikzpicture}
        [scale=0.67, node distance=1.3cm,base node/.style={circle,draw,minimum size=21pt}]
            \node[base node, very thick, label={[text=red]above left:0}] (5){1};
            \node[base node, very thick, label={[text=red]above left:+6}][left of=5] (3){2};
            \node[base node, very thick, label={[text=red]above:-6}][above of=5] (1){3};
            \node[base node, very thick, label={[text=red]below:-6}][below of=5] (4){4};
            \node[base node, very thick, label={[text=red] above left:+6}][right of=5] (2){5};
            \path [->, very thick] (3) edge node[left] {} (5);
            \path [<-, very thick] (5) edge node[right] {} (2);
            \path [->, very thick](1) edge[bend right=50] node[left] {} (3);
            \path [->, very thick](4) edge[bend left=50] node[left] {} (3);
            \path [->, very thick](4) edge[bend right=50] node[left] {} (2);
            \path [->, very thick](1) edge[bend left=50] node[left] {} (2);
            \path [->, very thick] (5) edge node[left] {} (1);
            \path [->, very thick] (5) edge node[left] {} (4);
        \end{tikzpicture}
        \caption{A non-DDM labeling}  \label{fig:nonDDM W4}
    \end{subfigure}
     \begin{subfigure}{0.35\textwidth}
        \centering
        \begin{tikzpicture}
        [scale=0.67,node distance=1.3cm,base node/.style={circle,draw,minimum size=21pt}]
            \node[base node, very thick ] (5){};
            \node[base node, very thick][left of=5] (3){};
            \node[base node, very thick ][above of=5] (1){ };
            \node[base node, very thick ][below of=5] (4){};
            \node[base node, very thick ][right of=5] (2){};
            \path [->, very thick] (3) edge node[left] {} (5);
            \path [<-, very thick] (5) edge node[right] {} (2);
            \path [->, very thick](1) edge[bend right=50] node[left] {} (3);
            \path [->, very thick](4) edge[bend left=50] node[left] {} (3);
            \path [->, very thick](4) edge[bend right=50] node[left] {} (2);
            \path [->, very thick](1) edge[bend left=50] node[left] {} (2);
            \path [->, very thick] (1) edge node[left] {} (5);
            \path [->, very thick] (4) edge node[left] {} (5);
        \end{tikzpicture}

        \vspace{5.5mm}
        
        \caption{Has no DDM labeling}  \label{fig:nonDDM W4c}
    \end{subfigure}
    \caption{Examples of various orientations on $\ar{W_4}$, with labelings and vertex weights included in (a) and (b) } 
    \label{fig:3versionsW4}
\end{figure}
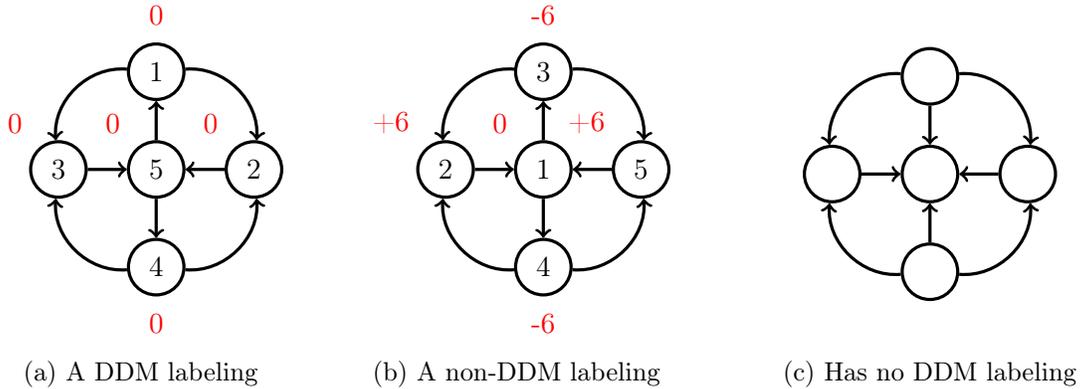

\section{Imbalances}\label{Imbalance}

Whereas the weight of a vertex $v$ indicates the difference in label sums  at $v$, the \textit{imbalance} of $v$ indicates the difference in the number of neighbors.

\begin{definition}
Given a digraph $D$, the \textbf{imbalance of a vertex $v$} is
\[imb(v) = |N^+(v)| - |N^-(v)|. \]
\end{definition}

\begin{definition}[Graph parameter]
The \textbf{imbalance of a digraph $D$} is
\[imb(D) = \max_{\forall v\in V(D)}  |imb(v)|.\]
\end{definition}

\begin{figure}[ht]
\begin{center}
\begin{tikzpicture}
[node distance=1.4cm, base node/.style={circle,draw,minimum size=15pt},scale=.8]

\node[base node, very thick, label={[text=red]left:-2}] (6) at (-0.8,2.5) {$v_6$};
\node[base node, very thick, label={[text=red]right:+1}] (5) at (1,2.5) {$v_5$};
\node[base node, very thick, label={[text=red]right:0}] (4) at (2.6,1) {$v_4$};
\node[base node, very thick, label={[text=red]right:0}] (3) at (2.5,-0.5) {$v_3$};
\node[base node, very thick, label={[text=red]right:+1}] (2) at (1,-1.8) {$v_2$};
\node[base node, very thick, label={[text=red]left:-1}] (1) at (-0.8,-1.8) {$v_1$};
\node[base node, very thick, label={[text=red]left:-1}] (8) at (-2.5,-0.5) {$v_8$};
\node[base node, very thick, label={[text=red]left:+2}] (7) at (-2.4,1) {$v_7$};


\path[->, very thick] (6) edge (4);
\path[->, very thick] (6) edge (2);
\path[->, very thick] (6) edge (1);
\path[->, very thick] (7) edge (6);

\path[->, very thick] (2) edge (7);
\path[->, very thick] (3) edge (7);
\path[->, very thick] (4) edge (7);
\path[->, very thick] (5) edge (7);

\path[->, very thick] (7) edge (8);

\path[->, very thick] (4) edge (5);
\path[->, very thick] (3) edge (5);

\path[->, very thick] (8) edge (3);
\path[->, very thick] (8) edge (4);

\path[->, very thick] (1) edge (2);
\path[->, very thick] (1) edge (4);

\path[->, very thick] (4) edge (3);

\end{tikzpicture}\caption{A digraph $D$ showing vertex imbalances. Note $imb(D) = 2$, while for instance $imb(v_3)=0$}\label{fig:VertexImbalance}
\end{center}
\end{figure}
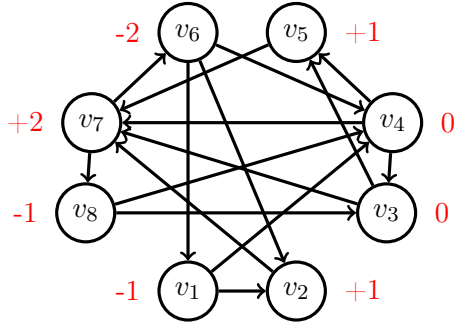

In this paper, we call a digraph {\it balanced} if every vertex has imbalance $0$, that is, if $imb(D)=0$.  See Figure \ref{fig:VertexImbalance} for an example of a digraph with imbalance 2.

Here we introduce some linear algebra terminology to better assist with the proofs related to imbalances.  We will use $\mathbf{0}$ and $\mathbf{1}$ for the zero column vector $[0, \dots, 0]^{T}$ and the column vector of all ones $[1, \dots, 1]^{T}$, each of length $n$, respectively. 

We follow the convention that the {\it adjacency matrix} $A=(a_{ij})_{1\leq i,j\leq n}$ of an oriented graph $\AR{G}$ with vertex set $V(\AR{G})=\{v_1, \dots, v_n\}$ is the  $n \times n$ matrix with entries $a_{ij}=1$ if $v_iv_j \in E(\overrightarrow{G})$ and $a_{ij} = 0$ otherwise.  In addition, we define the {\it skew-adjacency matrix} $S=(s_{ij})_{1\leq i,j\leq n}$ of $\AR{G}$ to be the matrix with entries given by 
\begin{align*}
    s_{ij} = 
    \begin{cases}
    -1, & \text{if } v_{i} v_{j} \in E(\overrightarrow{G}) \\
    1, & \text{if } v_{j}v_{i} \in E(\overrightarrow{G})  \\
    0, & \text{otherwise.}
    \end{cases}
\end{align*}

\noindent Note that $S=A^T-A$ is skew symmetric ($S=-S^T$) and that the sum of each row is the imbalance of the corresponding vertex of $\overrightarrow{G}$.

Given a labeling $f$ of $\overrightarrow{G}$,  let $\lv{x}=[f(v_1),\dots,f(v_n)]^{T}$ be the \textit{label vector}, and let the \textit{weight vector} and \textit{imbalance vector} of $\overrightarrow{G}$ be the vectors
\[
\wt{G} = 
\begin{pmatrix}
    wt(v_{1})\\
    wt(v_{2})\\
    \vdots\\
    wt(v_{n})
\end{pmatrix}
\quad \text{and} \quad
\imb{G} = 
\begin{pmatrix}
    imb(v_{1})\\
    imb(v_{2})\\
    \vdots\\
    imb(v_{n})
\end{pmatrix},
\]  
respectively. 
Figure \ref{fig:ex} illustrates the weight vector, imbalance vector and skew-adjacency matrix for $\overrightarrow{W_4}$ with an arbitrary labeling $f$.

\begin{figure}[H]
    \centering
    \begin{subfigure}{0.45\textwidth}
        \centering
        \begin{tikzpicture}
        [node distance=1.8cm,base node/.style={circle,draw,minimum size=25pt}, scale = 0.4]
            \node[base node, very thick] (5){$v_{5}$};
            \node[base node, very thick][left of=5] (3){$v_{3}$};
            \node[base node, very thick][above of=5] (1){$v_{1}$};
            \node[base node, very thick][below of=5] (4){$v_{4}$};
            \node[base node, very thick][right of=5] (2){$v_{2}$};
            \path [->, very thick] (3) edge node[left] {} (5);
            \path [<-, very thick] (5) edge node[right] {} (2);
            \path [->, very thick](1) edge[bend right=50] node[left] {} (3);
            \path [->, very thick](4) edge[bend left=50] node[left] {} (3);
            \path [->, very thick](4) edge[bend right=50] node[left] {} (2);
            \path [->, very thick](1) edge[bend left=50] node[left] {} (2);
            \path [->, very thick] (5) edge node[left] {} (1);
            \path [->, very thick] (5) edge node[left] {} (4);
        \end{tikzpicture}
        \caption{$\ar{W_4}$} \label{fig:labels}
    \end{subfigure}
    \hspace{0.5cm}
    \begin{subfigure}{0.45\textwidth}
        \begin{align*}
        \wt{W_4} = %
        \begin{pmatrix}
            f(v_{5})-f(v_{2})-f(v_{3})\\
            f(v_{1})+f(v_{4})-f(v_{5})\\
            f(v_{1})+f(v_{4})-f(v_{5})\\
            f(v_{5})-f(v_{2})-f(v_{3})\\
            f(v_{2})+f(v_{3})-f(v_{1})-f(v_{4})
        \end{pmatrix}
        \end{align*}
        \caption{Weight vector for $\ar{W_4}$} \label{fig:weights}
    \end{subfigure}
    
\vspace{1em}
    
    \begin{subfigure}{0.45\textwidth}
        \begin{align*}
        \imb{W_4} = %
        \begin{pmatrix}
            -1\\
            1\\
            1\\
            -1\\
            0
        \end{pmatrix}
        \end{align*}
        \caption{Imbalance vector for $\ar{W_4}$} \label{fig:imb}
    \end{subfigure}
    \hspace{0.5cm}
    \begin{subfigure}{0.45\textwidth}
        \begin{align*}
        S_{\overrightarrow{W_4}} = %
        \begin{pmatrix}
        0 & -1 & -1 & 0 & 1 \\
        1 & 0  & 0  & 1 & -1 \\
        1 & 0  & 0  & 1 & -1 \\
        0 & -1 & -1 & 0 & 1 \\
       -1 & 1  & 1  &-1 & 0
        \end{pmatrix}
        \end{align*}
        \caption{Skew-adjacency matrix for $\ar{W_4}$} \label{fig:matrix}
    \end{subfigure}
    \caption{The weight and imbalance vectors of $\ar{W_4}$ along with its skew-adjacency matrix} \label{fig:ex} 
\end{figure}

\begin{lemma}\label{lemma:weightv}
Let $\AR{G}$ be an oriented graph. For any label vector $\lv{x}$ we have that $S\, \lv{x} = \wt{G}$.
 
\end{lemma}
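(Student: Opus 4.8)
The plan is to compute the $i$th coordinate of the matrix–vector product $S\,\lv{x}$ directly from the definition of the skew-adjacency matrix and match it term by term with the definition of $wt(v_i)$. First I would fix an index $i\in\{1,\dots,n\}$ and write $(S\,\lv{x})_i=\sum_{j=1}^{n}s_{ij}f(v_j)$. Then I would split this sum according to the three cases defining $s_{ij}$: indices $j$ with $v_iv_j\in E(\AR{G})$ contribute $-f(v_j)$, indices $j$ with $v_jv_i\in E(\AR{G})$ contribute $+f(v_j)$, and all remaining indices contribute $0$. Because $\AR{G}$ is an oriented graph, no ordered pair and its reverse are simultaneously edges, so these three cases are mutually exclusive and exhaustive, and the split is well defined.

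Next I would identify the index sets involved: $\{\,j:v_jv_i\in E(\AR{G})\,\}=\{\,j:v_j\in N^+(v_i)\,\}$ and $\{\,j:v_iv_j\in E(\AR{G})\,\}=\{\,j:v_j\in N^-(v_i)\,\}$. Substituting these, the $i$th coordinate of $S\,\lv{x}$ equals $\sum_{v_j\in N^+(v_i)}f(v_j)-\sum_{v_j\in N^-(v_i)}f(v_j)$, which is exactly $wt(v_i)$ by Definition~\ref{defn:DDM}. Since $i$ was arbitrary, $S\,\lv{x}$ and $\wt{G}$ agree in every coordinate, which is the desired equality.

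There is essentially no obstacle here beyond sign bookkeeping: one must confirm that the convention $S=A^{T}-A$ really does place $+1$ in position $(i,j)$ exactly when $v_j$ is an \emph{in}-neighbor of $v_i$, so that the plus sign lands on the $N^+$ sum as in Definition~\ref{defn:DDM}. A quick comparison of $S_{\overrightarrow{W_4}}$ with $\wt{W_4}$ in Figure~\ref{fig:ex} serves as a sanity check that the signs line up correctly.
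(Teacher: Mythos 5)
Your proof is correct and follows essentially the same route as the paper's: both are direct coordinate computations showing that the $i$th entry of $S\,\lv{x}$ equals $\sum_{u\in N^{+}(v_i)}f(u)-\sum_{u\in N^{-}(v_i)}f(u)$. The only cosmetic difference is that the paper organizes the calculation through the decomposition $S=A^{T}-A$ and reads off the rows of $A^{T}$ and $A$ separately, whereas you work directly from the entrywise definition of $S$; the sign bookkeeping you flag checks out under the paper's conventions.
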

\begin{proof}
Let $A$ and $S$ be the adjacency and skew-adjacency matrices associated to $\AR{G}$, respectively. Let $\lv{x} = [f(v_1),\dots,f(v_n)]^{T}$ be the label vector. Then

\begin{align*}
    S \lv{x} =%
    S
    \begin{pmatrix}
        f(v_{1}) \\
        \vdots \\
       f(v_{n}) 
    \end{pmatrix}
    = 
    A^{T}
    \begin{pmatrix}
        f(v_{1}) \\
        \vdots \\
       f(v_{n}) 
    \end{pmatrix}
    -
    A
    \begin{pmatrix}
        f(v_{1}) \\
        \vdots \\
       f(v_{n})
    \end{pmatrix}.    
\end{align*}

For a given vertex $v_i$, let $r(i)=|N^{+}(v_i)|$ and $s(i)=|N^{-}(v_i)|$. Enumerate the in- and out-neighborhoods of $v_i$ as $N^{+}(v_{i}) = \{v_{i_{1}}, \dots, v_{i_{r(i)}}\}$ and $N^{-}(v_{i}) = \{v_{i'_{1}}, \dots, v_{i'_{s(i)}}\}$.
Observe that the $i$-th row of $A^{T}$ has ones in the entries $a_{i,i_{1}}, \dots, a_{i,i_{r(i)}}$. Therefore, the $i$-th entry of $A^{T}\lv{x}$ equals $\displaystyle \sum_{j=1}^{r(i)} f(v_{i_{j}})$.
Similarly, the $i$-th row of $A$ has ones in the entries $a_{i,i'_{1}}, \dots, a_{i,i'_{s(i)}}$. Therefore, the $i$-th entry of $A\lv{x}$ equals $\displaystyle \sum_{j=1}^{s(i)} f(v_{i'_{j}})$.

Hence,
\begin{align*}
    S \lv{x}
    = 
    \begin{pmatrix}
        \displaystyle \sum_{j=1}^{r(1)} f(v_{1_{j}}) - \sum_{j=1}^{s(1)} f(v_{1'_{j}}) \\
        \vdots \\
       \displaystyle \sum_{j=1}^{r(n)} f(v_{n_{j}}) - \sum_{j=1}^{s(n)} f(v_{n'_{j}}) 
    \end{pmatrix} 
    =
    \begin{pmatrix}
        wt(v_{1})\\
        \vdots \\
        wt(v_{n}) 
    \end{pmatrix}.
\end{align*}
\end{proof}

\begin{corollary}\label{coro:imbalancev}
Let $\AR{G}$ be an oriented graph, then $S\mathbf{1} = \imb{G}$.

\end{corollary}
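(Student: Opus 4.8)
The plan is to obtain this immediately from Lemma~\ref{lemma:weightv} by specializing to the constant labeling. First I would observe that Lemma~\ref{lemma:weightv} is stated for an \emph{arbitrary} label vector $\lv{x}$, with no requirement that the underlying labeling $f$ be a bijection onto $\{1,\dots,n\}$; in particular it applies to the all-ones vector $\mathbf{1}$, which is precisely the label vector of the constant labeling $f \equiv 1$.

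Next I would compute $\wt{G}$ for this choice of $f$. By Definition~\ref{defn:vertexWeight}, the $i$-th entry of the weight vector is $wt(v_i) = \sum_{u \in N^+(v_i)} f(u) - \sum_{u \in N^-(v_i)} f(u)$, which for $f \equiv 1$ collapses to $|N^+(v_i)| - |N^-(v_i)| = imb(v_i)$. Hence the weight vector associated to the all-ones label vector is exactly $\imb{G}$. Substituting $\lv{x} = \mathbf{1}$ into the identity $S\,\lv{x} = \wt{G}$ of Lemma~\ref{lemma:weightv} then gives $S\mathbf{1} = \imb{G}$.

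There is essentially no obstacle here; the only point worth flagging is that one must invoke the lemma in the generality in which it was stated (for all label vectors, not merely those arising from DDM labelings). Alternatively, the corollary can be read off directly from the structure of $S$: the vector $S\mathbf{1}$ is the vector of row sums of $S$, and by the observation following the definition of the skew-adjacency matrix the $i$-th row sum of $S$ equals $imb(v_i)$, so $S\mathbf{1} = \imb{G}$. I would present the one-line deduction from Lemma~\ref{lemma:weightv} as the proof and mention the row-sum interpretation as a brief remark.
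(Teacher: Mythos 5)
Your proposal is correct and matches the paper's proof: the paper likewise specializes Lemma~\ref{lemma:weightv} to the constant labeling $f(v_i)=1$ and reads off that the resulting weight vector is $\imb{G}$. Your added remark about row sums of $S$ is a valid alternative observation but not needed.
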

\begin{proof}
Consider the labeling $f:V(\AR{G}) \to \mathbb Z^{+}$ given by $f(v_{i}) = 1$ for all $i=1,\dots,n$. From Lemma \ref{lemma:weightv} we have that 
\begin{align*}
    S\mathbf{1}
    =
    S
    \begin{pmatrix}
        f(v_{1}) \\
        \vdots \\
        f(v_{n}) 
    \end{pmatrix}
    = 
    \begin{pmatrix}
        r(1) - s(1) \\
        \vdots \\
        r(n) - s(n) 
    \end{pmatrix} 
    =
    \begin{pmatrix}
        imb(v_{1})\\
        \vdots \\
        imb(v_{n}) 
    \end{pmatrix}.
\end{align*}
\end{proof}

Using this notation, for any oriented graph $\overrightarrow{G}$ we see, 
\begin{equation*}
        \begin{split}
            \sum_{v \in V(\ar{G})}imb(v)&=\mathbf{1}^{T} \imb{G} 
            \\
            &= \mathbf{1}^{T} S \mathbf{1} \\
            &= \mathbf{1}^{T} (-S^{T}) \mathbf{1} \\
            &= -(S\mathbf{1})^{T}\mathbf{1} \\
            &=-\imb{G}^{T}\mathbf{1} \\
            &= -\sum_{v \in V(\ar{G})}imb(v).
        \end{split}
    \end{equation*}

Thus, we get the following:

\begin{observation}\label{obs:balancesum}
    In any oriented graph $\ar{G}$, $\displaystyle \sum_{v \in V(\ar{G})}imb(v)=0$. 
\end{observation}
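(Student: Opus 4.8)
The plan is to give the statement two one-line proofs and commit to whichever reads more cleanly in context; since all the matrix machinery is already in place, I would present the linear-algebraic one. Before that, I would note the purely combinatorial reading of the identity, since it explains \emph{why} the statement is true: by definition $imb(v) = |N^+(v)| - |N^-(v)|$ is the in-degree of $v$ minus the out-degree of $v$, so $\sum_{v} imb(v) = \sum_{v} |N^+(v)| - \sum_{v} |N^-(v)|$. Each directed edge $uv \in E(\ar{G})$ is counted exactly once in the first total (when the index vertex is $v$) and exactly once in the second (when the index vertex is $u$), so both totals equal $|E(\ar{G})|$ and the difference is $0$. This is just the standard "sum of in-degrees equals sum of out-degrees" fact.

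For the proof I would actually write down, I would invoke Corollary~\ref{coro:imbalancev} to get $\imb{G} = S\mathbf{1}$, whence $\sum_{v \in V(\ar{G})} imb(v) = \mathbf{1}^{T}\imb{G} = \mathbf{1}^{T} S \mathbf{1}$. Because this quantity is a scalar it equals its own transpose, so $\mathbf{1}^{T} S \mathbf{1} = (\mathbf{1}^{T} S \mathbf{1})^{T} = \mathbf{1}^{T} S^{T} \mathbf{1} = -\,\mathbf{1}^{T} S \mathbf{1}$, using skew-symmetry $S^{T} = -S$ in the last step. Therefore $2\,\mathbf{1}^{T} S \mathbf{1} = 0$, and the sum of the imbalances vanishes. (This is exactly the chain of equalities that would precede the statement; I would simply package it as the proof, or conversely state the observation first and then record the chain as its justification.)

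Honestly, there is no real obstacle: the result is an immediate consequence of skew-symmetry of $S$, equivalently of double-counting the edges, so the "hard part" is purely cosmetic — deciding whether to lean on Lemma~\ref{lemma:weightv}/Corollary~\ref{coro:imbalancev} (so that the fact $x^{T}Sx = 0$ for skew-symmetric $S$ does the work) or to argue directly with degrees, and being careful with the transpose bookkeeping in the scalar identity $\mathbf{1}^{T} S^{T} \mathbf{1} = -\mathbf{1}^{T} S \mathbf{1}$. I would include one sentence pointing out the degree-counting interpretation so the reader sees the identity is not an artifact of the matrix formalism.
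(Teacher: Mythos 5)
Your proof is correct and is essentially identical to the paper's: the authors establish the observation via exactly the chain $\sum_v imb(v) = \mathbf{1}^{T} S \mathbf{1} = \mathbf{1}^{T}(-S^{T})\mathbf{1} = -\sum_v imb(v)$, relying on Corollary~\ref{coro:imbalancev} and the skew-symmetry of $S$, just as you propose. Your added remark about the degree-counting interpretation is a nice touch but does not change the substance of the argument.
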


\begin{lemma}\label{lem:balancelabelsum}
    Let $\AR{G}$ be a DDMOG with $V(\AR{G})=\{v_1,\dots, v_n\}$ and DDM labeling $f:V(\AR{G})\rightarrow \{1, 2, \dots, n\}$. Then
    \begin{align}
        \sum_{i=1}^{n} imb(v_{i})f(v_{i}) = 0 
    \end{align}
\end{lemma}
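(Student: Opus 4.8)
The statement is an immediate consequence of the linear-algebraic machinery already set up: evaluate the scalar $\mathbf{1}^{T} S\, \lv{x}$ in two different ways, exactly mirroring the computation that was used just above to derive Observation~\ref{obs:balancesum}. The key inputs are Lemma~\ref{lemma:weightv} ($S\,\lv{x} = \wt{G}$), Corollary~\ref{coro:imbalancev} ($S\mathbf{1} = \imb{G}$), the skew-symmetry $S = -S^{T}$, and the hypothesis that $f$ is a DDM labeling, i.e. $\wt{G} = \mathbf{0}$.

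\textbf{Steps.} First, let $\lv{x} = [f(v_1),\dots,f(v_n)]^{T}$ be the label vector and note that, since $f$ is a DDM labeling, Lemma~\ref{lemma:weightv} gives $S\,\lv{x} = \wt{G} = \mathbf{0}$, hence $\mathbf{1}^{T}S\,\lv{x} = \mathbf{1}^{T}\mathbf{0} = 0$. Second, compute the same scalar by moving the matrix to the left factor: using $S = -S^{T}$ and Corollary~\ref{coro:imbalancev},
\begin{equation*}
\mathbf{1}^{T}S\,\lv{x} = \bigl(S^{T}\mathbf{1}\bigr)^{T}\lv{x} = \bigl(-S\mathbf{1}\bigr)^{T}\lv{x} = -\,\imb{G}^{T}\lv{x} = -\sum_{i=1}^{n} imb(v_{i})\,f(v_{i}).
\end{equation*}
Third, equate the two evaluations to obtain $\displaystyle \sum_{i=1}^{n} imb(v_{i})\,f(v_{i}) = 0$, which is the claim.

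\textbf{Anticipated obstacle.} There is essentially no serious obstacle here; the only point requiring care is the bookkeeping of the sign coming from skew-symmetry (making sure $\mathbf{1}^{T}S = -\imb{G}^{T}$ rather than $+\imb{G}^{T}$), and the observation that the DDM hypothesis is precisely what forces $\wt{G} = \mathbf{0}$ so that the left-hand evaluation vanishes. One could equally phrase the argument coordinate-wise without matrices, but the matrix form is cleanest and reuses the displayed identity preceding Observation~\ref{obs:balancesum} almost verbatim, with $\mathbf{1}$ replaced by $\lv{x}$ on the right.
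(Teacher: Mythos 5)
Your proof is correct and is essentially the paper's own argument: both evaluate the scalar pairing of the label vector and $\mathbf{1}$ through $S$ in two ways, using $S\,\lv{x}=\wt{G}=\mathbf{0}$ from the DDM hypothesis, $S\mathbf{1}=\imb{G}$ from Corollary~\ref{coro:imbalancev}, and skew-symmetry to match the two evaluations. The only cosmetic difference is that the paper writes the scalar as $\lv{x}^{T}S\mathbf{1}$ and transposes the relation $S\lv{x}=\mathbf{0}$, whereas you write $\mathbf{1}^{T}S\lv{x}$ and move $S$ onto $\mathbf{1}$; the sign bookkeeping in your version is right.
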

\begin{proof}
Observe that by Corollary \ref{coro:imbalancev}
\begin{align*}
    \sum_{i=1}^{n}imb(v_{i}) f(v_{i}) = \lv{x}^{T} \imb{G} = \lv{x}^{T} S \mathbf{1}
\end{align*}
where $\lv{x}$ is the label vector induced by $f$.

Since $\overrightarrow{G}$ is DDMOG, we have that $(A^{T}-A)\lv{x} = \mathbf{0}$, see \cite{FM}. Then
\begin{align*}
    \mathbf{0}^{T} = \lv{x}^{T}(A^{T}-A)^{T} = \lv{x}^{T}(A-A^{T}) = \lv{x}^{T} (-S)
\end{align*}
Thus $\lv{x}^{T} S = \mathbf{0}^{T}$, which proves the statement above.   
\end{proof}

With these results, we introduce our first way to build a new DDMOG from an existing DDMOG. In particular, for any DDMOG with imbalance 1 we can create a bigger and now balanced DDMOG.   This technique is illustrated in Figure \ref{fig:ReducingImb}.

\begin{theorem}
    If $\ar{G}$ is a DDMOG with $imb(\ar{G})=1$ and order $n$, then there exists a DDMOG $\ar{G'}$ with $imb(\ar{G'})=0$ and order $n+1$ containing $\ar{G}$ as an induced subgraph.\label{thm:imb1}
\end{theorem}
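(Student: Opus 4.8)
The plan is to build $\ar{G'}$ by adjoining a single new vertex $w$ and correcting the imbalances of $\ar{G}$ one arc at a time, and then to show that the labeling of $\ar{G'}$ which shifts every old label up by $1$ and assigns $1$ to $w$ is DDM.

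First I would sort the vertices of $\ar{G}$ by imbalance: since $imb(\ar{G}) = 1$, every vertex has imbalance in $\{-1,0,1\}$, so set $P = \{v : imb(v) = 1\}$ and $M = \{v : imb(v) = -1\}$. Let $\ar{G'}$ be obtained from $\ar{G}$ by adding $w$, an arc $(v,w)$ for every $v \in P$, and an arc $(w,v)$ for every $v \in M$. Then $\ar{G}$ is an induced subgraph of $\ar{G'}$ (every new arc is incident to $w$), $\ar{G'}$ is an oriented graph (the new arcs create no $2$-cycle), and $\ar{G'}$ has order $n+1$. Checking $imb(\ar{G'}) = 0$ is routine: each $v \in P$ gains the out-neighbor $w$ and so its imbalance drops from $1$ to $0$; each $v \in M$ gains the in-neighbor $w$ and so its imbalance rises from $-1$ to $0$; vertices of imbalance $0$ are untouched; and $imb(w) = |P| - |M|$, which equals $0$ because $\sum_v imb(v) = 0$ by Observation \ref{obs:balancesum}.

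Next I would define $f' : V(\ar{G'}) \to \{1,\dots,n+1\}$ by $f'(w) = 1$ and $f'(v) = f(v)+1$ for $v \in V(\ar{G})$; since $f$ is a bijection onto $\{1,\dots,n\}$, $f'$ is a bijection onto $\{1,\dots,n+1\}$. To see $f'$ is DDM I would compute $wt_{f'}$ at each vertex. For a vertex $v$ of $\ar{G}$, increasing every neighbor's label by $1$ changes $wt(v)$ by exactly $|N^+_{\ar{G}}(v)| - |N^-_{\ar{G}}(v)| = imb(v)$, while the at-most-one new arc incident to $v$ contributes exactly $-imb(v)$ to $wt_{f'}(v)$ (it is subtracted when $v \in P$ and added when $v \in M$, each with weight $f'(w)=1$, and there is no new arc when $imb(v)=0$); hence $wt_{f'}(v) = wt_f(v) + imb(v) - imb(v) = 0$. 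For $w$ we get $wt_{f'}(w) = \sum_{v\in P} f'(v) - \sum_{v\in M} f'(v) = \left(\sum_{v\in P} f(v) - \sum_{v\in M} f(v)\right) + (|P|-|M|)$; the second term is $0$ as above, and the first term equals $\sum_{i=1}^n imb(v_i) f(v_i)$, which is $0$ by Lemma \ref{lem:balancelabelsum}. Alternatively, one can phrase the whole verification as $S'\,\bar{x}' = \mathbf{0}$, where $S'$ is the block skew-adjacency matrix of $\ar{G'}$ and $\bar{x}'$ is the label vector of $f'$, and then invoke Lemma \ref{lemma:weightv}.

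The construction is essentially forced once one commits to repairing imbalances one arc per imbalanced vertex, so the real work is confirming the weights remain $0$. The step I expect to require the most care is the weight of the new vertex $w$: that is exactly where Lemma \ref{lem:balancelabelsum} (that $\sum_{i=1}^n imb(v_i) f(v_i) = 0$ for a DDM labeling) is indispensable, and it is also the reason the uniform shift by $1$ is the right choice — it is the unique shift that both frees up the value $1$ for $w$ and makes each vertex's weight correction equal to precisely the imbalance that its single new arc must cancel.
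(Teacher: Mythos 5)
Your proposal is correct and follows essentially the same route as the paper's proof: the same partition of $V(\ar{G})$ by imbalance, the same new vertex with arcs oriented to cancel each nonzero imbalance, the same shifted labeling $f'(v)=f(v)+1$ with $f'(w)=1$, and the same appeal to Observation \ref{obs:balancesum} for $imb(w)=0$ and to Lemma \ref{lem:balancelabelsum} for $wt_{f'}(w)=0$. The only cosmetic difference is that you compress the three weight verifications for old vertices into the single identity $wt_{f'}(v)=wt_f(v)+imb(v)-imb(v)$, where the paper checks $S_0$, $S_+$, and $S_-$ separately.
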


\begin{proof}
    Since $\ar{G}$ is an oriented graph of imbalance 1, there exists a partition of $V(\ar{G})=\{S_0,S_+,S_-\}$ such that $S_0=\{v \in V(\ar{G}): imb(v)=0\}$, $S_+=\{v \in V(\ar{G}): imb(v)=1\}$, and $S_-=\{v \in V(\ar{G}): imb(v)=-1\}$.  Also note that by Observation \ref{obs:balancesum}, $|S_+|=|S_-|$.  Let $\ar{G'}$ be the graph with vertex set $V(\ar{G'})=V(\ar{G}) \cup \{v'\}$ and edge set $E(\ar{G'})=E(\ar{G}) \cup \{v'v:v \in S_-\} \cup \{vv':v \in S_+\}$. Note that $\ar{G} \leq \ar{G'}$, and moreover, that $\ar{G}$ is an induced subgraph of $\ar{G'}$.  Furthermore, due to the construction of $N_{\ar{G'}}^+(v')$ and  $N_{\ar{G'}}^-(v')$, it follows that $imb(\ar{G'})=0$.  Now let $f:V(\ar{G})\rightarrow \{1,2,\dots,n\}$ be the DDM labeling of $\ar{G}$ and define $f':V(\ar{G'})\rightarrow \{1,2,\dots,n+1\}$ such that $f'(v')=1$ and for $v \in V(\ar{G})$, $f'(v)=f(v)+1$.  
    
    First consider $v \in S_0$.  Since $\ar{G}$ is DDMO and $imb(v)=0$,
    \begin{equation*}
        \begin{split}
            wt_{f'}(v) &=\sum_{u \in N_{\ar{G'}}^+(v)} f'(u)-\sum_{u \in N_{\ar{G'}}^-(v)} f'(u) \\
            &=\sum_{u \in N_{\ar{G}}^+(v)} (f(u)+1)-\sum_{u \in N_{\ar{G}}^-(v)} (f(u)+1) \\
            &=\left(\sum_{u \in N_{\ar{G}}^+(v)} f(u)-\sum_{u \in N_{\ar{G}}^-(v)} f(u)\right)+imb(v)=0.
        \end{split}
    \end{equation*}

    Next consider $v \in S_+$, and note that $N_{\ar{G'}}^+(v)=N_{\ar{G}}^+(v)$ and $N_{\ar{G'}}^-(v)=N_{\ar{G}}^-(v) \cup \{v'\}$.  Since $\ar{G}$ is DDMO and $imb(v)=1$,
    \begin{equation*}
        \begin{split}
            wt_{f'}(v) &=\sum_{u \in N_{\ar{G'}}^+(v)} f'(u)-\sum_{u \in N_{\ar{G'}}^-(v)} f'(u) \\
            &=\sum_{u \in N_{\ar{G}}^+(v)} (f(u)+1)-\left(f'(v')+\sum_{u \in N_{\ar{G}}^-(v)} (f(u)+1)\right)\\
            &=\left(\sum_{u \in N_{\ar{G}}^+(v)} f(u)-\sum_{u \in N_{\ar{G}}^-(v)} f(u)\right)-1+imb(v)=0.
        \end{split}
    \end{equation*}
    Similarly, for $v \in S_-$, since $N_{\ar{G'}}^-(v)=N_{\ar{G}}^-(v)$ and $N_{\ar{G'}}^+(v)=N_{\ar{G}}^+(v) \cup \{v'\}$, it follows that $wt_{f'}(v)=0$.

    Finally, consider $v'$ and note that $N_{\ar{G'}}^+(v')=S_+$ and $N_{\ar{G'}}^-(v')=S_-$.  By Lemma \ref{lem:balancelabelsum},
    \[\sum_{u \in S_+}imb(u)f(u)+\sum_{u \in S_-}imb(u)f(u)=\sum_{u \in V\left(\ar{G}\right)}imb(u)f(u)=0.\]
    Since $\forall u\in S_+$ $imb(u)=1$ and $\forall u\in S_-$ $imb(u)=-1$ it follows that
    \begin{equation*}
        \begin{split}
            wt_{f'}(v') &=\sum_{u \in N_{\ar{G'}}^+(v')}f'(u)-\sum_{u \in N_{\ar{G'}}^-(v')}f'(u) \\
            &=\sum_{u \in S_+}(f(u)+1)-\sum_{u \in S_-}(f(u)+1) \\
            &=\sum_{u \in S_+}imb(u)f(u)+\sum_{u \in S_-}imb(u)f(u)+|S_+|-|S_-| \\
            &=\sum_{u \in V\left(\ar{G}\right)}imb(u)f(u)+|S_+|-|S_-|=0.
        \end{split}
    \end{equation*}
    Thus for each $v \in V(\ar{G'})$, $wt_{f'}(v)=0$, and so $\ar{G'}$ is a DDMOG of order $n+1$ and imbalance 0 where $\ar{G}$ is an induced subgraph of $\ar{G'}$.
\end{proof}

\section{Existence of arbitrarily large connected DDMOGs}\label{arbitrarily_large}

In this section, we create DDMOGs of any order from previously known DDMOGs. We rely on a technique called vertex coalescence introduced in \cite{coal}. We often also need a balanced DDMOG as part of these constructions. Although we will use $C_6(1,2)$ for all our examples of a balanced DDMOG, there are other examples including an orientation of $K_{4s,4s}$ for any $s \geq 1$ as given in \cite{FM}.

\begin{figure}[h]
    \centering
		  \begin{subfigure}[t]{.5\textwidth}
  \centering
  \begin{tikzpicture}[scale=0.8,node distance=1.4cm, base node/.style={circle,draw,minimum size=25pt}]

\node[base node, very thick] (5) at (1, 4) {5};
\node[text=red] at (1.9,3.8) {0};

\node[base node, very thick] (1) at (-2, 5.5) {1};
\node[text=red] at (-2,6.4) {-1};
\node[base node, very thick] (2) at (0, 5.5) {2};
\node[text=red] at (-0.6,6) {1};
\node[base node, very thick] (3) at (2,5.5) {3};
\node[text=red] at (2.6,6) {1};
\node[base node, very thick] (4) at (4,5.5) {4};
\node[text=red] at (4,6.4) {-1};

\path[->, very thick] (5) edge (1);
\path[->, very thick] (2) edge (5);
\path[->, very thick] (3) edge  (5); %
\path[->, very thick] (5) edge  (4); %

\path[->, very thick] (1) edge (2);
\path[->, very thick] (1) edge [bend left=39]  (3);
\path[->, very thick] (4) edge[bend right=39]  (2);
\path[->, very thick] (4) edge (3);

\end{tikzpicture}
  \caption{$imb(\ar{G})=1$}
  \end{subfigure}
		  \begin{subfigure}[t]{.49\textwidth}
  \centering
  \begin{tikzpicture}[scale=0.8,node distance=1.4cm, base node/.style={circle,draw,minimum size=25pt}]

\node[base node, very thick] (6) at (1, 7.5) {1}; 
\node at (1.7,8.1) {$v'$};

\node[base node, very thick] (5) at (1, 4) {6}; 

\node[base node, very thick] (1) at (-2, 5.5) {2}; 
\node[base node, very thick] (2) at (0, 5.5) {3}; 
\node[base node, very thick] (3) at (2,5.5) {4}; 
\node[base node, very thick] (4) at (4,5.5) {5}; 

\path[->, very thick] (5) edge (1);
\path[->, very thick] (2) edge (5);
\path[->, very thick] (3) edge  (5); %
\path[->, very thick] (5) edge  (4); %

\path[->, very thick] (1) edge (2);
\path[->, very thick] (1) edge [bend left=39]  (3);
\path[->, very thick] (4) edge[bend right=39]  (2);
\path[->, very thick] (4) edge (3);

\path[->, very thick] (6) edge[bend right=30]  (1);
\path[<-, very thick] (6) edge (2);
\path[<-, very thick] (6) edge (3);
\path[->, very thick] (6) edge[bend left=30] (4);

\end{tikzpicture}
  \caption{$imb(\ar{G'})=0$}
  \end{subfigure}\caption{The process of reducing imbalance in Theorem \ref{thm:imb1}}\label{fig:ReducingImb}
			\end{figure}
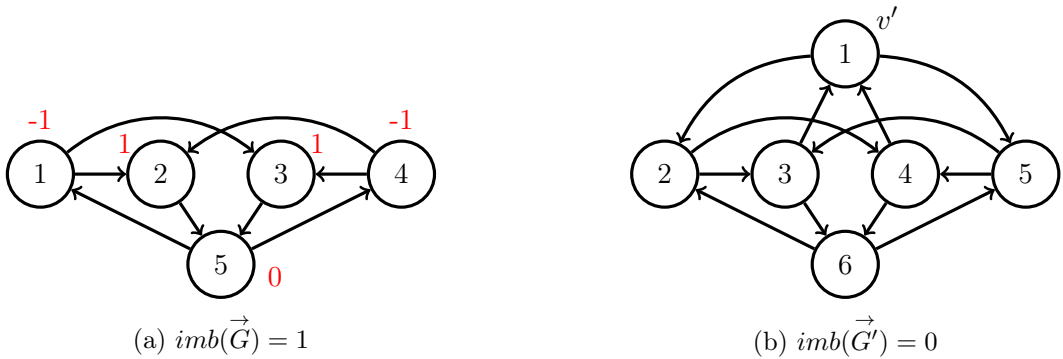

\begin{definition}  

Consider a vertex $u$ from an oriented graph $\overrightarrow{G_1}$ and a vertex $v$ from an oriented graph $\overrightarrow{G_2}$. The \textit{vertex coalescence} of $\overrightarrow{G_1}$ and $\overrightarrow{G_2}$ via $u$ and $v$ denoted $\overrightarrow{G_1} \cdot_{uv} \overrightarrow{G_2}$ results from adding all edges $(u,x)$ to $\overrightarrow{G_1} \cup \overrightarrow{G_2}$ where $(v,x)$ is an edge in $\overrightarrow{G_2}$, all edges $(x,u)$ to $\overrightarrow{G_1} \cup \overrightarrow{G_2}$ where $(x,v)$ is an edge in $\overrightarrow{G_2}$, and then deleting $v$ from the resulting graph. 
\end{definition}

Besides vertex coalescence, we also rely on the assistance of Lemma~\ref{lem:shift} to grow larger DDMOGs.  This lemma shows that by shifting each label in a balanced oriented graph by the same constant $s$, the weight of each vertex remains unchanged in the resulting labeling.

\begin{lemma}\label{lem:shift}
Let $s \in \mathbb Z$, $\ar{G}$ be a balanced oriented graph of order $n$ with labeling $f:V(\ar{G})\rightarrow \mathbb Z^+$, and $h$ be the labeling $h: V(\ar{G})\rightarrow \mathbb Z^+$ given by $h(v)=f(v)+s$ for all $v \in V(\ar{G})$.  Then for all $v \in V(\ar{G})$, $wt_f(v)=wt_h(v)$. 
\end{lemma}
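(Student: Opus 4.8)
The plan is to compute $wt_h(v)$ directly from the definition and show the contribution of the shift cancels. Fix an arbitrary vertex $v \in V(\ar{G})$ and write $h(u) = f(u) + s$ for every $u$. Then
\[
wt_h(v) = \sum_{u \in N^+(v)} h(u) - \sum_{u \in N^-(v)} h(u) = \sum_{u \in N^+(v)} (f(u)+s) - \sum_{u \in N^-(v)} (f(u)+s).
\]
Distributing the sums, the ``$f$'' parts reassemble into $wt_f(v)$, and the ``$s$'' parts contribute $s\,|N^+(v)| - s\,|N^-(v)| = s\bigl(|N^+(v)| - |N^-(v)|\bigr) = s\cdot imb(v)$. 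Hence $wt_h(v) = wt_f(v) + s\cdot imb(v)$.

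Now invoke the hypothesis that $\ar{G}$ is balanced: by definition this means $imb(\ar{G}) = \max_{v} |imb(v)| = 0$, so $imb(v) = 0$ for every vertex $v$. Therefore $wt_h(v) = wt_f(v)$, and since $v$ was arbitrary this holds for all $v \in V(\ar{G})$, completing the proof. One small point worth a sentence in the writeup is that $h$ indeed maps into $\mathbb{Z}^+$ as claimed in the statement; this is only an issue if $s$ is negative, and strictly speaking it is a hypothesis the lemma should ensure is consistent, but since the conclusion is purely about weights it does not affect the cancellation argument — the identity $wt_h(v) = wt_f(v) + s\cdot imb(v)$ holds for any integer labeling.

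There is essentially no obstacle here: the argument is a one-line linearity computation together with the definition of ``balanced.'' Alternatively, one could phrase the entire proof in the matrix language developed earlier: with $\lv{x}$ the label vector of $f$ and $\lv{x} + s\mathbf{1}$ that of $h$, Lemma~\ref{lemma:weightv} gives $\wt{G}$ for $h$ equal to $S(\lv{x} + s\mathbf{1}) = S\lv{x} + sS\mathbf{1} = \wt{G} + s\,\imb{G}$ by Corollary~\ref{coro:imbalancev}, and $\imb{G} = \mathbf{0}$ since $\ar{G}$ is balanced. Either formulation is short; I would present the direct vertex-by-vertex version since it is self-contained and makes the role of the imbalance transparent, perhaps remarking that it also isolates why balancedness is exactly the hypothesis needed.
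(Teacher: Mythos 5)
Your proof is correct and follows essentially the same route as the paper: expand $wt_h(v)$ by linearity, collect the shift terms into $s\bigl(|N^+(v)|-|N^-(v)|\bigr)=s\cdot imb(v)$, and conclude from balancedness that this vanishes. The only cosmetic difference is that the paper writes the final correction term as $s\cdot imb(\ar{G})$ rather than $s\cdot imb(v)$; your vertex-level formulation is if anything slightly cleaner.
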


\begin{proof}
    Let $v \in V(\ar{G})$ be arbitrary.  Since $\ar{G}$ is balanced we know $imb(\ar{G})=0$, and
    \begin{equation*}
        \begin{split}
            wt_h(v)&=\sum_{u \in N^+_{\ar{G}}(v)}h(u)-\sum_{u \in N^-_{\ar{G}}(v)}h(u)\\
            &=\sum_{u \in N^+_{\ar{G}}(v)}(f(u)+s)-\sum_{u \in N^-_{\ar{G}}(v)}(f(u)+s)\\
            &=\left(\sum_{u \in N^+_{\ar{G}}(v)}f(u)-\sum_{u \in N^-_{\ar{G}}(v)}f(u)\right)+s\left(|N_{\ar{G}}^+(v)|-|N_{\ar{G}}^-(v)|\right)\\
            &=wt_f(v)+s\cdot imb(\ar{G})\\
            &=wt_f(v).
        \end{split}
    \end{equation*}
\end{proof}

In Theorem \ref{thm:chain}, we create a new DDMOG by combining an initial DDMOG with a balanced DDMOG using vertex coalescence. 

\begin{theorem} \label{thm:chain}
Let $\ar{G}$ be a DDMOG of order $n$, let $g$ be a DDM labeling of $\ar{G}$, and let $u \in V(\ar{G})$ be such that $g(u)=n$.  Let $\ar{H}$ be a DDMOG of order $m$, let $h$ be a DDM labeling of $\ar{H}$, and let $w \in V(\ar{H})$ be such that $h(w)=1$.  If $\ar{H}$ is balanced, then $\overrightarrow{K}=\overrightarrow{G} \cdot_{uw} \overrightarrow{H}$ is a DDMOG on $n+m-1$ vertices.   
\end{theorem}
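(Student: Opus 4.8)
The plan is to write down an explicit candidate DDM labeling of $\overrightarrow{K}=\overrightarrow{G}\cdot_{uw}\overrightarrow{H}$ assembled from $g$ and $h$, and then verify the weight condition vertex by vertex. Since $\overrightarrow{K}$ has exactly $n+m-1$ vertices (the coalescence deletes $w$), I would define $f:V(\overrightarrow{K})\to\{1,\dots,n+m-1\}$ by $f(v)=g(v)$ for $v\in V(\overrightarrow{G})$ and $f(v)=h(v)+(n-1)$ for $v\in V(\overrightarrow{H})\setminus\{w\}$. Because $g$ maps $V(\overrightarrow{G})$ bijectively onto $\{1,\dots,n\}$ and $h$ maps $V(\overrightarrow{H})$ bijectively onto $\{1,\dots,m\}$ with $h(w)=1$, the $g$-labels fill $\{1,\dots,n\}$ and the shifted $h$-labels fill $\{n+1,\dots,n+m-1\}$, so $f$ is a bijection onto $\{1,\dots,n+m-1\}$. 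I would also record the routine fact that $\overrightarrow{K}$ is indeed an oriented graph: the only vertex receiving arcs inherited from both $\overrightarrow{G}$ and $\overrightarrow{H}$ is $u$, and since $V(\overrightarrow{G})\setminus\{u\}$ and $V(\overrightarrow{H})\setminus\{w\}$ are disjoint, the coalescence creates neither a pair of opposite arcs nor a repeated arc.

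For the weight check I would split into three cases. If $v\in V(\overrightarrow{G})\setminus\{u\}$, then the coalescence does not alter $N^+(v)$ or $N^-(v)$, and these neighborhoods lie entirely in $V(\overrightarrow{G})$, where $f=g$; hence $wt_f(v)=wt_g(v)=0$. If $v\in V(\overrightarrow{H})\setminus\{w\}$, the neighborhoods of $v$ in $\overrightarrow{K}$ are obtained from those in $\overrightarrow{H}$ by replacing any occurrence of $w$ with $u$. The crux is that $f(u)=n=1+(n-1)=h(w)+(n-1)$, so $u$ carries exactly the label that $w$ would be assigned under the uniform shift $z\mapsto h(z)+(n-1)$ on $V(\overrightarrow{H})$; consequently $wt_f(v)$ equals the weight of $v$ in $\overrightarrow{H}$ computed with that shifted labeling. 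Since $\overrightarrow{H}$ is balanced, Lemma~\ref{lem:shift} (with $s=n-1$) says this shift preserves weights, so $wt_f(v)=wt_h(v)=0$.

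The remaining case is $v=u$. By the definition of vertex coalescence, $N^+_{\overrightarrow{K}}(u)=N^+_{\overrightarrow{G}}(u)\cup N^+_{\overrightarrow{H}}(w)$ and $N^-_{\overrightarrow{K}}(u)=N^-_{\overrightarrow{G}}(u)\cup N^-_{\overrightarrow{H}}(w)$, and both unions are disjoint because $\overrightarrow{G}$ and $\overrightarrow{H}$ share no vertices. Thus $wt_f(u)$ separates into a $\overrightarrow{G}$-part and an $\overrightarrow{H}$-part. The $\overrightarrow{G}$-part is $wt_g(u)=0$ since $f=g$ on $V(\overrightarrow{G})$. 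In the $\overrightarrow{H}$-part every neighbor of $w$ is distinct from $w$, hence receives label $h(z)+(n-1)$, so this part is exactly the weight of $w$ in $\overrightarrow{H}$ under the shift $z\mapsto h(z)+(n-1)$, which is $wt_h(w)=0$ again by Lemma~\ref{lem:shift} and the balancedness of $\overrightarrow{H}$. Therefore $wt_f(u)=0$, all vertex weights vanish, and $f$ is a DDM labeling of the $(n+m-1)$-vertex oriented graph $\overrightarrow{K}$.

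The only real idea --- and hence the ``hard part,'' though it is not hard --- is to choose the labeling so that the surviving merged vertex $u$ inherits precisely the value the deleted vertex $w$ would have after a uniform upward shift of $\overrightarrow{H}$'s labels; this is what lets Lemma~\ref{lem:shift} absorb the discrepancy, and it is exactly why the hypotheses $g(u)=n$, $h(w)=1$, and ``$\overrightarrow{H}$ balanced'' are all used. Everything else is bookkeeping with neighborhoods under the coalescence.
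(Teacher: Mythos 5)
Your proof is correct and follows essentially the same route as the paper's: the same labeling $f$ (with $f=g$ on $V(\overrightarrow{G})$ and $f=h+(n-1)$ on $V(\overrightarrow{H})\setminus\{w\}$), the same three-case weight check, and the same key observation that $f(u)=n=h(w)+(n-1)$ lets Lemma~\ref{lem:shift} and the balancedness of $\overrightarrow{H}$ absorb the shift. No gaps; nothing further is needed.
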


\begin{proof} First, define a labeling $h_{n-1}$ of $\AR{H}$ such that $h_{n-1}(v)=h(v)+n-1$ for each $v \in V(\AR{H})$.  Next note that $V(\ar{K})=(V(\ar{G})\cup V(\ar{H}))\setminus\{w\}$, and define a labeling $f: V(\ar{K})\rightarrow \{1,2,\dots,m+n-1\}$ by
\[f(v)=\begin{cases}g(v) &  \text{ if }v \in V(\ar{G})\\
h_{n-1}(v) & \text{ if }v \in V(\ar{H})\setminus \{w\}.\end{cases}\]
Since $N_{\ar{K}}^+(v)=N_{\ar{G}}^+(v)$ and $N_{\ar{K}}^-(v)=N_{\ar{G}}^-(v)$ for all $v \in V(\ar{G})\setminus \{u\}$ and $f(v)=g(v)$ for all $v \in V(\ar{G})$, it follows that for $v \in V(\ar{G})\setminus \{u\}$, $wt_f(v)=wt_g(v)=0$ with the second equality coming from the fact that $g$ is a DDM labeling of $\ar{G}$. 

Next note that for all vertices $v \in V(\ar{H})\setminus \{w\}$, $w \in N_{\ar{H}}^+(v)$ if and only if $N_{\ar{K}}^+(v)=(N_{\ar{H}}^+(v) \cup \{u\})\setminus \{w\}$, and otherwise when $w \not\in N_{\ar{H}}^+(v)$, $N_{\ar{K}}^+(v)=N_{\ar{H}}^+(v)$. Similarly  $w \in N_{\ar{H}}^-(v)$ if and only if $N_{\ar{K}}^-(v)=(N_{\ar{H}}^-(v) \cup \{u\})\setminus \{w\}$, and otherwise when $w \not\in N_{\ar{H}}^-(v)$, $N_{\ar{K}}^-(v)=N_{\ar{H}}^-(v)$.  
Since $\ar{H}$ is balanced it follows from Lemma \ref{lem:shift} that $wt_{h_{n-1}}(v)=wt_h(v)=0$ for all $v \in V(\AR{H})$, with the second equality coming from the fact that $h$ is a DDM labeling of $\ar{H}$.  Furthermore, since $f(v)=h_{n-1}(v)$ for all $v \in V(\AR{H}) \setminus \{w\}$, and since $f(u)=g(u)=n=1+(n-1)=h(w)+n-1=h_{n-1}(w)$, it follows that for $v \in V(\AR{H}) \setminus \{w\}$, $wt_f(v)=wt_{h_{n-1}}(v)=0$.

Finally, since
 $N_{\ar{K}}^+(u)=N_{\ar{G}}^+(u) \cup N_{\ar{H}}^+(w)$ and $N_{\ar{K}}^-(u)=N_{\ar{G}}^-(u)\cup N_{\ar{G}}^+(w)$ we have $wt_f(u)=wt_g(u)+wt_{h_{n-1}}(w)=wt_g(u)+wt_h(w)$. Given that $g$ and $h$ are DDM labelings of $\ar{G}$ and $\ar{H}$ respectively we get $wt_f(u)=wt_g(u)+wt_h(w)=0$.  Since $f:V(\ar{K})\rightarrow \{1,2,\dots,n+m-1\}$ is a bijection and $wt_f(v)=0$ for all $v \in V(\ar{K})$, $\ar{K}=\overrightarrow{G} \cdot_{uw} \overrightarrow{H}$ is a DDMOG on $n+m-1$ vertices.
\end{proof}

Figure~\ref{fig:exampleLace} gives us an example of the type of construction given in Theorem \ref{thm:chain} where the vertex labeled 5 in $\ar{W_4}$ is coalesced to the vertex labeled 1 in $\ar{C_6}(1,2)$.

\begin{figure}[ht]
\centering

\begin{minipage}[b]{0.45\textwidth}  \centering
  \begin{subfigure}[t]{\textwidth}
  \centering
  \begin{tikzpicture}[scale=0.8,node distance=1.4cm, base node/.style={circle,draw,minimum size=25pt}]

\node[base node, very thick] (5) at (1, 4) {5};
\node at (1.9,3.8) {$u$};


\node[base node, very thick] (1) at (-2, 5.5) {1};
\node[base node, very thick] (2) at (0, 5.5) {2};
\node[base node, very thick] (3) at (2,5.5) {3};
\node[base node, very thick] (4) at (4,5.5) {4};

\path[->, very thick] (5) edge (1);
\path[->, very thick] (2) edge (5);
\path[->, very thick] (3) edge  (5); %
\path[->, very thick] (5) edge  (4); %

\path[->, very thick] (1) edge (2);
\path[->, very thick] (1) edge [bend left=39]  (3);
\path[->, very thick] (4) edge[bend right=39]  (2);
\path[->, very thick] (4) edge (3);



\end{tikzpicture}
  \caption{A DDM Labeling of $\overrightarrow{W_4}$}
  \label{fig:a}
  \end{subfigure}
	
  \vspace{0.5cm} 
	
  \begin{subfigure}[b][.21\textheight][t]{\textwidth}  \centering
  \begin{tikzpicture}[scale=0.8,node distance=1.4cm, base node/.style={circle,draw,minimum size=25pt}]

\node[base node, very thick] (5) at (1, 4) {1}; 
\node at (1.9,4.5) {$w$};

\node[base node, very thick] (6) at (-2, 2.5) {2}; 
\node[base node, very thick] (7) at (0, 2.5) {3}; 
\node[base node, very thick] (8) at (2, 2.5) {4};  
\node[base node, very thick] (9) at (4, 2.5) {5}; 


\node[base node, very thick] (10) at (1, 1) {6}; 

\path[->, very thick] (6) edge [bend left=15] (5);
\path[->, very thick] (5) edge [bend right=10] (7);
\path[->, very thick] (5) edge [bend left=10] (8);

\path[->, very thick] (7) edge (6);
\path[->, very thick] (8) edge[bend right=30] (6); 
\path[->, very thick] (7) edge[bend left=30] (9); 
\path[->, very thick] (8) edge (9);
\path[->, very thick] (9) edge[bend right=20] (5);  



\path[->, very thick] (10) edge (7);
\path[->, very thick] (10) edge (8);
\path[->, very thick] (9) edge (10);
\path[->, very thick] (6) edge (10);


\end{tikzpicture}
  \caption{A labeling of $\overrightarrow{C_6}(1,2)$}
  \label{fig:b}
  \end{subfigure}
\end{minipage}
\hfill
\begin{minipage}[b]{0.45\textwidth}
\begin{center}
  \begin{subfigure}[t]{\textwidth}
  \centering
  \begin{tikzpicture}[scale=0.9,node distance=1.4cm, base node/.style={circle,draw,minimum size=25pt}]

\node[base node, very thick] (5) at (1, 4) {5};
\node[base node, very thick] (6) at (-2, 2.5) {6};
\node[base node, very thick] (7) at (0, 2.5) {7};
\node[base node, very thick] (8) at (2, 2.5) {8};
\node[base node, very thick] (9) at (4, 2.5) {9};

\node[base node, very thick] (1) at (-2, 5.5) {1};
\node[base node, very thick] (2) at (0, 5.5) {2};
\node[base node, very thick] (3) at (2,5.5) {3};
\node[base node, very thick] (4) at (4,5.5) {4};

\node[base node, very thick] (10) at (1, 1) {10};

\path[->, very thick] (6) edge [bend left=15] (5);
\path[->, very thick] (5) edge [bend right=10] (7);
\path[->, very thick] (5) edge [bend left=10] (8);

\path[->, very thick] (7) edge (6);
\path[->, very thick] (8) edge[bend right=30] (6); 
\path[->, very thick] (7) edge[bend left=30] (9); 
\path[->, very thick] (8) edge (9);
\path[->, very thick] (9) edge[bend right=20] (5);  

\path[->, very thick] (5) edge (1);
\path[->, very thick] (2) edge (5);
\path[->, very thick] (3) edge  (5); %
\path[->, very thick] (5) edge  (4); %

\path[->, very thick] (1) edge (2);
\path[->, very thick] (1) edge [bend left=39]  (3);
\path[->, very thick] (4) edge[bend right=39]  (2);
\path[->, very thick] (4) edge (3);

\path[->, very thick] (10) edge (7);
\path[->, very thick] (10) edge (8);
\path[->, very thick] (9) edge (10);
\path[->, very thick] (6) edge (10);


\end{tikzpicture}
  \caption{	DDM Labeling of $\overrightarrow{W_4} \cdot_{uw} \overrightarrow{C_6}(1,2)$}
  \label{fig:c}
  \end{subfigure}\end{center}
		\vspace{6mm}
\end{minipage}

\caption{Illustrating the DDM labeling used in the proof of Theorem \ref{thm:chain}}\label{fig:exampleLace}
\end{figure}
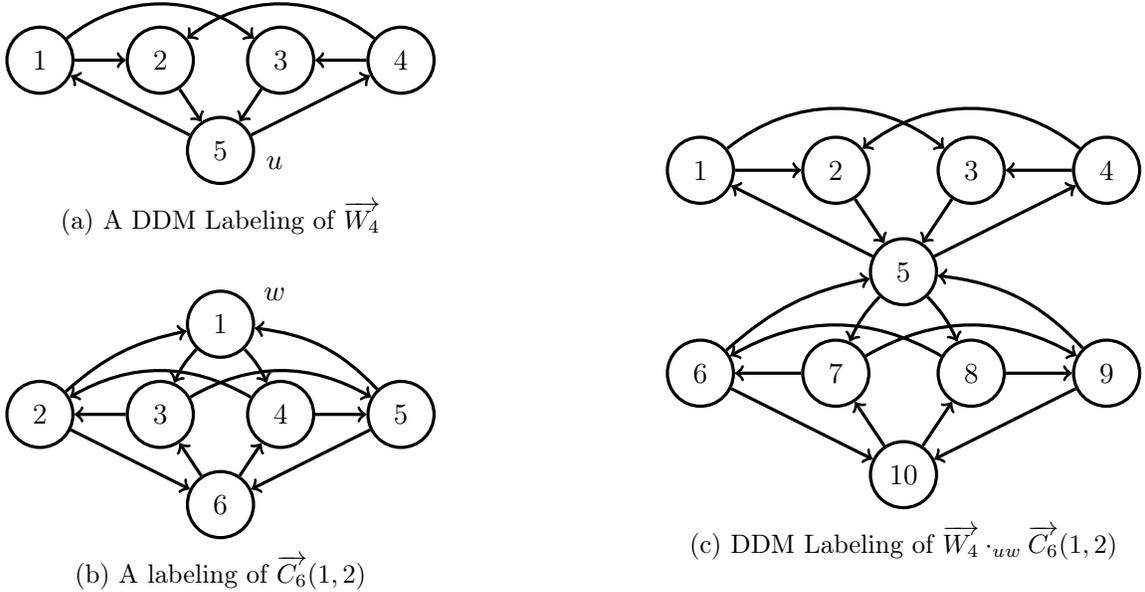

\begin{theorem}\label{thm:DDMOGforAllNAtLeast5}
For all $n\geq 5$, there exists a DDMOG on $n$ vertices.  
\end{theorem}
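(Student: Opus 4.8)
The plan is to use the two results that immediately precede this statement, namely Theorem~\ref{thm:imb1} and Theorem~\ref{thm:chain}, together with a small stock of explicitly-known small DDMOGs. The overall strategy is a standard induction-by-construction: find enough small DDMOGs to cover a contiguous block of initial values of $n$, and then show that any DDMOG can be ``grown'' to produce one on more vertices, so that all sufficiently large $n$ are reached.

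First I would assemble the base cases. From the figures in the paper we already have $\overrightarrow{W_4}$ as a DDMOG on $5$ vertices, and $\overrightarrow{C_6}(1,2)$ (used throughout Section~\ref{arbitrarily_large}) as a \emph{balanced} DDMOG on $6$ vertices; the coalescence $\overrightarrow{W_4}\cdot_{uw}\overrightarrow{C_6}(1,2)$ from Figure~\ref{fig:exampleLace} gives one on $n+m-1 = 5+6-1 = 10$ vertices. More usefully, I would exhibit (or cite) explicit DDMOGs on $n = 5, 6, 7, 8$ vertices. Here the natural tool is Theorem~\ref{thm:imb1}: $\overrightarrow{W_4}$ has imbalance $1$ (see Figure~\ref{fig:ReducingImb}), so applying Theorem~\ref{thm:imb1} to it produces a \emph{balanced} DDMOG on $6$ vertices. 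I would similarly want DDMOGs on $7$ and $8$ vertices, obtained either by a direct small construction or by a further application of one of the two theorems; the key point is just to nail down a block of four consecutive orders, say $n \in \{5,6,7,8\}$ (or equivalently any four consecutive values), with at least one \emph{balanced} example available in a suitable residue class.

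Next comes the inductive step, which is exactly what Theorem~\ref{thm:chain} is built for. The cleanest route: note that $\overrightarrow{C_6}(1,2)$ is a balanced DDMOG of order $6$, so if $\overrightarrow{G}$ is any DDMOG of order $n$ with a DDM labeling in which some vertex $u$ receives label $n$, then Theorem~\ref{thm:chain} (taking $\overrightarrow{H} = \overrightarrow{C_6}(1,2)$, $m = 6$, and $w$ the vertex labeled $1$ in the fixed labeling of $\overrightarrow{C_6}(1,2)$) yields a DDMOG of order $n + 5$. Since every DDM labeling is a bijection onto $\{1,\dots,n\}$, the needed vertex $u$ with $g(u) = n$ always exists, so this step is unconditional. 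Thus the order function ``jumps by $5$'' freely. Combining this with base cases at four (or more) consecutive orders $5,6,7,8,9$ — if I can also produce one on $9$ vertices, or alternatively argue the $n=9$ case separately — would let me reach every $n \ge 5$: writing $n = n_0 + 5k$ with $n_0 \in \{5,6,7,8,9\}$ and $k \ge 0$, start from the base DDMOG of order $n_0$ and apply the coalescence step $k$ times. Alternatively, if only $n_0 \in \{5,6,7,8\}$ are available as bases, I would instead use a balanced DDMOG of order $5$ (or an appropriate mix of orders-$5$-and-$6$ balanced gadgets) to get a step size of $4$ (or a combination giving step sizes with gcd $1$), which again covers all $n \ge 5$.

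The main obstacle I anticipate is \emph{not} the inductive machinery — Theorems~\ref{thm:imb1} and~\ref{thm:chain} do all the heavy lifting — but rather pinning down the base cases, in particular producing an explicit DDMOG on $7$ vertices (and making sure whatever small examples I use genuinely satisfy Property~\ref{property1}, i.e.\ every vertex is incident to at least three arcs not all pointing the same way). One clean way to handle $n=7$ is to look for a balanced DDMOG of order $7$ directly, perhaps by orienting a suitable $3$-regular-ish graph on $7$ vertices and hand-checking weights; another is to apply Theorem~\ref{thm:chain} with two small pieces whose orders sum appropriately (e.g.\ a DDMOG of order $5$ coalesced with a balanced DDMOG of order $3$ — though the paper notes graphs of minimum degree $\le 2$ are not DDMO, so an order-$3$ DDMOG cannot exist, and one must instead combine, say, orders giving $5+4-1 = 8$ or handle $7$ by a bespoke construction). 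So I would spend the bulk of the write-up carefully presenting explicit small DDMOGs (with figures) for whichever short block of consecutive orders is needed, verify each is a valid DDMOG, flag which are balanced, and then let the two previously-proved theorems finish the argument by the residue-class induction described above.
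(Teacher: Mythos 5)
Your strategy is exactly the paper's: exhibit DDMOGs on five consecutive orders $5,6,7,8,9$ and then repeatedly apply Theorem~\ref{thm:chain} with the balanced graph $\overrightarrow{C_6}(1,2)$ to step the order up by $5$, so that every $n\geq 5$ is reached as $n_0+5k$. The inductive step in your write-up is correct and unconditional, just as you argue (a DDM labeling is a bijection onto $\{1,\dots,n\}$, so the vertex labeled $n$ always exists, and $\overrightarrow{C_6}(1,2)$ is balanced with a vertex labeled $1$).

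The one substantive gap is that the base cases you defer --- explicit DDMOGs on $7$, $8$, and $9$ vertices --- are in fact essentially the \emph{entire} content of the paper's proof: it simply displays concrete labeled orientations $\overrightarrow{R_7}$, $\overrightarrow{R_8}$, $\overrightarrow{R_9}$ (Figure~\ref{fig:DDMOGs}) alongside $\overrightarrow{W_4}$ and $\overrightarrow{C_6}(1,2)$, and then invokes Theorem~\ref{thm:chain} in one line. Since neither Theorem~\ref{thm:imb1} nor Theorem~\ref{thm:chain} can manufacture orders $7$, $8$, or $9$ from the order-$5$ and order-$6$ examples alone (coalescence of those gives $10$, and Theorem~\ref{thm:imb1} only adds one vertex to an imbalance-$1$ graph), these three examples genuinely have to be constructed by hand, and your proposal does not do so. Your fallback ideas are also shakier than the main route: an order-$3$ balanced DDMOG cannot exist (as you note), and a step size of $4$ would require a balanced DDMOG whose existence you have not established, so the clean path is the one the paper takes --- five explicit consecutive bases plus the $+5$ coalescence step.
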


\noindent {\bf Proof:} Figure \ref{fig:DDMOGs} gives DDMOGs on 5, 6, 7, 8, and 9 vertices, $\AR{R_5}, \AR{R_6}, \AR{R_7}, \AR{R_8}, \AR{R_9}$, respectively.

In this proof, we will always coalesce the vertex with the highest label in the first graph with the vertex labeled 1 in the second graph. For the sake of notation, we will not put the vertices being coalesced in the notation. Per Theorem \ref{thm:chain}, $\AR{R_i} \cdot \AR{R_6}$ produces a DDMOG on $i+5$ vertices. Beginning with $R_i$ and iteratively performing a vertex coalescence with $\AR{R_6}$ a total of $j$ times generates a DDMOG on $5j+i$ vertices. Since $5 \leq i \leq 9$ and $0 \leq j < \infty$, this generates DDMOGs on $n$ vertices for every $n \geq 5$. \qed 

Figure ~\ref{fig:exampleLace} gives us a DDMOG on 10 vertices (namely $\AR{R_5} \cdot \AR{R_6}$).   Figure \ref{fig:gluingToGetAllN} gives an example on 15 vertices by coalescing twice, creating $(\AR{R_5} \cdot \AR{R_6}) \cdot \AR{R_6}$.

\begin{figure}[H]
    \centering
    \begin{subfigure}{0.45\textwidth}
        \centering
        \begin{tikzpicture}
        [node distance=1.8cm,base node/.style={circle,draw,minimum size=25pt}, scale=0.8]
            \node[base node, very thick] (5){5};
            \node[base node, very thick][left of=5] (3){3};
            \node[base node, very thick][above of=5] (1){1};
            \node[base node, very thick][below of=5] (4){4};
            \node[base node, very thick][right of=5] (2){2};
            \path [->, very thick] (3) edge node[left] {} (5);
            \path [<-, very thick] (5) edge node[right] {} (2);
            \path [->, very thick](1) edge[bend right=50] node[left] {} (3);
            \path [->, very thick](4) edge[bend left=50] node[left] {} (3);
            \path [->, very thick](4) edge[bend right=50] node[left] {} (2);
            \path [->, very thick](1) edge[bend left=50] node[left] {} (2);
            \path [->, very thick] (5) edge node[left] {} (1);
            \path [->, very thick] (5) edge node[left] {} (4);
        \end{tikzpicture}
        \caption{$\AR{R_{5}}=\AR{W_4}$} \label{$W_4$ figure}
    \end{subfigure}
    \hspace{0.5cm}
    \begin{subfigure}{0.45\textwidth}
        \centering
        \begin{tikzpicture}
        [node distance=1.4cm,base node/.style={circle,draw,minimum size=25pt}, scale=0.9]
            \node[base node, very thick] (v1) at (90:2) {2};
            \node[base node, very thick] (v2) at (30:2) {3};
            \node[base node, very thick] (v3) at (-30:2) {6};
            \node[base node, very thick] (v4) at (-90:2) {5};
            \node[base node, very thick] (v5) at (-150:2) {4};
            \node[base node, very thick] (v6) at (150:2) {1};
            
            \path[->, very thick] (v1) edge  (v6); 
            \path[->, very thick] (v1) edge (v3);  
            
            \path[->, very thick] (v2) edge (v1);
            \path[->, very thick] (v2) edge (v4);
            
            \path[->, very thick] (v3) edge (v2);
            \path[->, very thick] (v3) edge (v5);
            
            \path[->, very thick] (v4) edge (v3);
            \path[->, very thick] (v4) edge (v6);
            
            \path[->, very thick] (v5) edge (v4);
            \path[->, very thick] (v5) edge (v1);
            
            \path[->, very thick] (v6) edge (v5);
            \path[->, very thick] (v6) edge (v2);
        \end{tikzpicture}
        \caption{$\AR{R_{6}}=\AR{C_6}(1,2)$}
        \label{fig:circulantC612}
    \end{subfigure}
    
\vspace{1em}
    
    \begin{subfigure}{0.45\textwidth}
        \centering
        \begin{tikzpicture}[node distance=0.5cm,base node/.style={circle,draw,minimum size=25pt}, ->, >=stealth, very thick, scale=0.8]
            \node[base node] (v1) at (90:3) {1};
            \node[base node] (v2) at ({90-360/7}:3) {2};
            \node[base node] (v3) at ({90-2*360/7}:3) {3};
            \node[base node] (v4) at ({90-3*360/7}:3) {4};
            \node[base node] (v5) at ({90-4*360/7}:3) {5};
            \node[base node] (v6) at ({90-5*360/7}:3) {6};
            \node[base node] (v7) at ({90-6*360/7}:3) {7};
            \path (v1) edge (v4);
            \path (v1) edge (v7);
            \path (v2) edge (v1);
            \path (v2) edge (v6);
            \path (v3) edge (v1);
            \path (v3) edge (v2);
            \path (v3) edge (v6);
            \path (v4) edge (v2);
            \path (v4) edge (v3);
            \path (v4) edge (v7);
            \path (v5) edge (v3);
            \path (v5) edge (v4);
            \path (v6) edge (v1);
            \path (v6) edge (v4);
            \path (v7) edge (v5);      
        \end{tikzpicture}        
        \caption{$\AR{R_{7}}$} \label{fig:G7}
    \end{subfigure}
    \hspace{0.5cm}
    \begin{subfigure}{0.45\textwidth}
        \centering
        \begin{tikzpicture}[node distance=0.5cm,base node/.style={circle,draw,minimum size=25pt}, ->, >=stealth, very thick, scale=0.8]
            \node[base node] (v1) at (90:3) {1};
            \node[base node] (v2) at ({90-360/8}:3) {2};
            \node[base node] (v3) at ({90-2*360/8}:3) {3};
            \node[base node] (v4) at ({90-3*360/8}:3) {4};
            \node[base node] (v5) at ({90-4*360/8}:3) {5};
            \node[base node] (v6) at ({90-5*360/8}:3) {6};
            \node[base node] (v7) at ({90-6*360/8}:3) {7};
            \node[base node] (v8) at ({90-7*360/8}:3) {8};
            \path (v1) edge (v5);
            \path (v1) edge (v6);
            \path (v2) edge (v4);
            \path (v2) edge (v5);
            \path (v2) edge (v6);
            \path (v3) edge (v1);
            \path (v3) edge (v4);
            \path (v3) edge (v8);
            \path (v4) edge (v5);
            \path (v5) edge (v7);
            \path (v6) edge (v3);
            \path (v7) edge (v2); 
            \path (v7) edge (v3); 
            \path (v8) edge (v1);
            \path (v8) edge (v2); 
        \end{tikzpicture}        
        \caption{$\AR{R_{8}}$} \label{fig:G8}
    \end{subfigure}

\vspace{1em}

    \begin{subfigure}{0.9\textwidth}
        \centering
        \begin{tikzpicture}[node distance=0.5cm,base node/.style={circle,draw,minimum size=25pt}, ->, >=stealth, very thick, scale=0.8]
            \node[base node] (v1) at (90:3) {1};
            \node[base node] (v2) at ({90-360/9}:3) {2};
            \node[base node] (v3) at ({90-2*360/9}:3) {3};
            \node[base node] (v4) at ({90-3*360/9}:3) {4};
            \node[base node] (v5) at ({90-4*360/9}:3) {5};
            \node[base node] (v6) at ({90-5*360/9}:3) {6};
            \node[base node] (v7) at ({90-6*360/9}:3) {7};
            \node[base node] (v8) at ({90-7*360/9}:3) {8};
            \node[base node] (v9) at ({90-8*360/9}:3) {9};
            \path (v1) edge (v2);
            \path (v1) edge (v8);
            \path (v2) edge (v8);
            \path (v3) edge (v1);
            \path (v3) edge (v5);
            \path (v3) edge (v6);            
            \path (v3) edge (v8);
            \path (v4) edge (v3);
            \path (v4) edge (v7);
            \path (v4) edge (v9);
            \path (v5) edge (v4);
            \path (v5) edge (v7);
            \path (v6) edge (v4);
            \path (v6) edge (v7);
            \path (v7) edge (v1);
            \path (v7) edge (v2); 
            \path (v7) edge (v3);
            \path (v7) edge (v9);
            \path (v8) edge (v4);
            \path (v8) edge (v5);
            \path (v8) edge (v6);
            \path (v9) edge (v3);
            \path (v9) edge (v8);
        \end{tikzpicture}        
        \caption{$\AR{R_{9}}$} \label{fig:G9}
    \end{subfigure}
    \caption{DDMOGs} \label{fig:DDMOGs}
\end{figure}









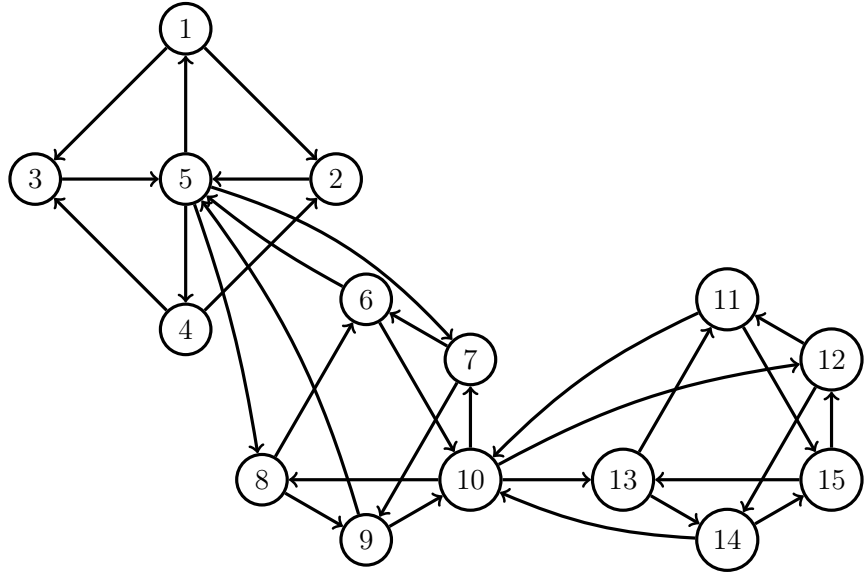
\begin{figure}[H]
\centering
\begin{tikzpicture}
[node distance=2cm, base node/.style={circle,draw,minimum size=15pt},scale=0.8]
\node[base node, very thick] (5){5};
\node[base node, very thick][left of=5] (3){3};
\node[base node, very thick][above of=5] (1){1};
\node[base node, very thick][below of=5] (4){4};
 \node[base node, very thick][right of=5] (2){2};
  \path [->, very thick] (3) edge node[left] {} (5);
   \path [<-, very thick] (5) edge node[right] {} (2);
 \path [->, very thick](1) edge  node[left] {} (3);
 \path [->, very thick](4) edge  node[left] {} (3);
  \path [->, very thick](4) edge  node[left] {} (2);
  \path [->, very thick](1) edge  node[left] {} (2);
\path [->, very thick] (5) edge node[left] {} (1);
\path [->, very thick] (5) edge node[left] {} (4);

\node[base node, very thick] (w1) at ($(90:2) + (3,-4)$) {6};
\node[base node, very thick] (w2) at ($(30:2) + (3,-4)$) {7};
\node[base node, very thick] (w3) at ($(-30:2) + (3,-4)$) {10};
\node[base node, very thick] (w4) at ($(-90:2) + (3,-4)$) {9};
\node[base node, very thick] (w5) at ($(-150:2) + (3,-4)$) {8};

\path[->, very thick] (w1) edge [bend left=5] (5); 
\path[->, very thick] (w1) edge (w3);  
\path[->, very thick] (w2) edge (w1);
\path[->, very thick] (w2) edge (w4);
\path[->, very thick] (w3) edge (w2);
\path[->, very thick] (w3) edge (w5);
\path[->, very thick] (w4) edge (w3);
\path[->, very thick] (w4) edge [bend right=10] (5);
\path[->, very thick] (w5) edge (w4);
\path[->, very thick] (w5) edge (w1);
\path[->, very thick] (5) edge [bend left=5] (w5);
\path[->, very thick] (5) edge [bend left=15]  (w2);

\node[base node, very thick] (z1) at ($(90:2) + (9,-4)$) {11}; 
\node[base node, very thick] (z2) at ($(30:2) + (9,-4)$) {12}; 
\node[base node, very thick] (z3) at ($(-30:2) + (9,-4)$) {15}; 
\node[base node, very thick] (z4) at ($(-90:2) + (9,-4)$) {14}; 
\node[base node, very thick] (z5) at ($(-150:2) + (9,-4)$) {13}; 
 
\path[->, very thick] (z1) edge[bend right=10] (w3); 
\path[->, very thick] (z1) edge (z3);  
\path[->, very thick] (z2) edge (z1);
\path[->, very thick] (z2) edge (z4);
\path[->, very thick] (z3) edge (z2);
\path[->, very thick] (z3) edge (z5);
\path[->, very thick] (z4) edge (z3);
\path[->, very thick] (z4) edge[bend left=10] (w3);
\path[->, very thick] (z5) edge (z4);
\path[->, very thick] (z5) edge (z1);
\path[->, very thick] (w3) edge (z5);
\path[->, very thick] (w3) edge[bend left=10] (z2);
\end{tikzpicture}
 
\caption{A DDM labeling of $(\ar{R_5} \cdot \ar{R_6}) \cdot \ar{R_6}$}
\label{fig:gluingToGetAllN}
\end{figure}

Our technique in Theorem~\ref{thm:DDMOGforAllNAtLeast5} was to grow larger DDMOGs by repeatedly coalescing the highest labeled vertex in an initial DDMOG with the lowest labeled vertex in a balanced oriented graph, namely, $\AR{R_{6}}=\AR{C_6}(1,2)$.   This same technique cannot be used to grow larger DDMOGs using $\AR{R_{5}}=\AR{W_4}$ since $imb(\AR{W_4})=1$.  

On the other hand, by concentrating on repeatedly combining $W_4$'s at a central vertex, we find that another class of graphs formed by vertex coalescence is the windmill of $k$ wheels, denoted as $W_4^{(k)}$. In Theorem \ref{thmWindmillWheel}, we show that $W_4^{(k)}$ is DDMO for $k \geq 1$.

\begin{definition}\label{WWheelK}
     Let $n=1+4k$, $k \in \mathbb{Z}^+$. Let $G$ be the graph on $n$ vertices with
\begin{equation*}
V(G)=\{ v\} \cup \{v_{ij} \,:\, 1\leq i \leq k,\, 1\leq j \leq 4\}, \; \text{and}
\end{equation*}     
\begin{equation*}E(G)=\bigcup_{i=1}^k\Bigl( \{ v_{i1}v_{i2}, v_{i1}v_{i3}, v_{i3}v_{i4}, v_{i2}v_{i4}\}\cup \{vv_{i1}, vv_{i2}, v v_{i3}, vv_{i4}  \}\Bigl).
\end{equation*}
We call this graph $G=W_4^{(k)}$ a windmill of $k$  wheels, with {\it{central}} vertex $v$. An example of this graph (when $k=2$) is given in Figure \ref{fig:handdrawn_digraph1}. \end{definition}

\begin{figure}[H]
\centering
\begin{tikzpicture}
[node distance=1.4cm, base node/.style={circle,draw,minimum size=25pt}]

\node[base node, very thick] (1) at (-3, 2) {$v_{11}$};
\node[base node, very thick] (2) at (-1, 2) {$v_{12}$};
\node[base node, very thick] (7) at (1, 2) {$v_{13}$};
\node[base node, very thick] (8) at (3, 2) {$v_{14}$};

\node[base node, very thick] (3) at (-3, -1) {$v_{21}$};
\node[base node, very thick] (4) at (-1, -1) {$v_{22}$};
\node[base node, very thick] (5) at (1, -1) {$v_{23}$};
\node[base node, very thick] (6) at (3, -1) {$v_{24}$};

\node[base node, very thick] (9) at (0, 0.5) {$v$};

\path[-, very thick] (1) edge (2);
\path[-, very thick] (8) edge[bend right=30] (2);
\path[-, very thick] (8) edge (7);
\path[-, very thick] (1) edge[bend left=30] (7);

\path[-, very thick] (9) edge (1);
\path[-, very thick] (2) edge (9);
\path[-, very thick] (7) edge (9);
\path[-, very thick] (9) edge (8);

\path[-, very thick] (9) edge (3);
\path[-, very thick] (4) edge (9);
\path[-, very thick] (5) edge (9);
\path[-, very thick] (9) edge (6);

\path[-, very thick] (3) edge (4);
\path[-, very thick] (6) edge [bend left=30]  (4);
\path[-, very thick] (6) edge (5);
\path[-, very thick] (3) edge[bend right=30] (5);

\end{tikzpicture}
\caption{Windmill of two wheels, $W_4 ^{(2)}$}
\label{fig:handdrawn_digraph1}
\end{figure}
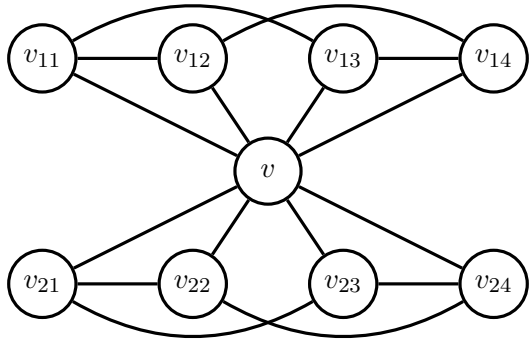

\begin{theorem}\label{thmWindmillWheel}
   $W_4^{(k)}$ is DDMO for all $k\in \mathbb{Z}^+$.

\end{theorem}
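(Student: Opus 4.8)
The plan is to exhibit one explicit orientation of $W_4^{(k)}$ together with one explicit labeling and then check that every vertex has weight $0$. The orientation treats all $k$ wheel "blades" identically and copies the DDM orientation of $\AR{W_4}=\AR{R_5}$ from Figure~\ref{fig:DDMOGs}, in which the hub happens to have imbalance $0$; in effect the construction is the vertex coalescence of $k$ copies of that oriented $\AR{W_4}$ identified at their hubs, with a suitable global relabeling.

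First I would fix the orientation. For each $i$, the $4$-cycle on $\{v_{i1},v_{i2},v_{i3},v_{i4}\}$ has the form $v_{i1}\,v_{i2}\,v_{i4}\,v_{i3}\,v_{i1}$, so $\{v_{i1},v_{i4}\}$ and $\{v_{i2},v_{i3}\}$ are its two pairs of opposite (non-adjacent) vertices. Orient the rim of blade $i$ so that $v_{i1},v_{i4}$ are rim-sources: $v_{i1}\to v_{i2}$, $v_{i1}\to v_{i3}$, $v_{i4}\to v_{i2}$, $v_{i4}\to v_{i3}$; and orient the spokes by $v\to v_{i1}$, $v\to v_{i4}$, $v_{i2}\to v$, $v_{i3}\to v$. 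Then each rim vertex meets three arcs, not all of one type (consistent with Property~\ref{property1}), and $v$ has $2k$ in-arcs and $2k$ out-arcs, so $imb(v)=0$.

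Next I would define the labeling. Set $f(v)=4k+1$, and partition $\{1,2,\dots,4k\}$ into the $2k$ pairwise-disjoint pairs $\{j,\,4k+1-j\}$, $j=1,\dots,2k$, each summing to $4k+1$. Hand two of these pairs to each blade $i$ (any assignment works, since the pairs are disjoint), giving one pair to the labels of $\{v_{i1},v_{i4}\}$ and the other to the labels of $\{v_{i2},v_{i3}\}$. Then $f$ is a bijection $V(W_4^{(k)})\to\{1,\dots,4k+1\}$ with $f(v_{i1})+f(v_{i4})=f(v_{i2})+f(v_{i3})=4k+1=f(v)$ for every $i$. The weight check is then immediate: a rim-source $v_{i1}$ has $N^+(v_{i1})=\{v\}$, $N^-(v_{i1})=\{v_{i2},v_{i3}\}$, so $wt(v_{i1})=f(v)-(f(v_{i2})+f(v_{i3}))=0$, and symmetrically for $v_{i4}$; a rim-sink $v_{i2}$ has $N^+(v_{i2})=\{v_{i1},v_{i4}\}$, $N^-(v_{i2})=\{v\}$, so $wt(v_{i2})=(f(v_{i1})+f(v_{i4}))-f(v)=0$, and symmetrically for $v_{i3}$; finally $N^+(v)=\bigcup_i\{v_{i2},v_{i3}\}$ and $N^-(v)=\bigcup_i\{v_{i1},v_{i4}\}$ give $wt(v)=k(4k+1)-k(4k+1)=0$.

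There is no serious obstacle once the mechanism is seen; the only thing that must be discovered is the right invariant. The chosen blade orientation forces each pair of opposite rim vertices in a blade to carry a label-sum equal to $f(v)$, and comparing $\sum_{\ell=1}^{4k+1}\ell=(4k+1)(2k+1)$ with $2k\cdot f(v)$ shows $f(v)=4k+1$ is in fact forced; after that the canonical pairing $\{j,\,4k+1-j\}$ and the computations above are routine. It is worth noting (perhaps as a remark following the proof) that this family is not reachable by the balanced-coalescence construction of Theorem~\ref{thm:chain}, since each $\AR{W_4}$ blade has imbalance $1$; the construction works precisely because the copies are glued at the imbalance-$0$ hub.
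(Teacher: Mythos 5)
Your proposal is correct and follows essentially the same route as the paper: the identical orientation (rim-sources $v_{i1},v_{i4}$, with spokes $v\to v_{i1},v_{i4}$ and $v_{i2},v_{i3}\to v$), the hub labeled $n=4k+1$, and rim labels taken from the complementary pairs $\{j,\,4k+1-j\}$ so that each opposite pair in a blade sums to $f(v)$. The paper merely fixes one explicit assignment of those pairs ($f(v_{i1})=2i-1$, $f(v_{i2})=2i$, $f(v_{i3})=n-2i$, $f(v_{i4})=n+1-2i$), which is a special case of your observation that any assignment of disjoint pairs works.
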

 \begin{proof} Let $G = W_4^{(k)}$ for some $k \in \mathbb{Z}^+$, with  central vertex $v$; note that the number of vertices in $G$ is  $n=1+4k$. When $k=1$, we have $G=W_4$, which is known to be DDMO.  Let $k>1$; first, we direct the edges of $\ar{G}$ as follows:
\begin{equation*} E(\ar{G})=\bigcup\limits_{i=1}^k\Bigl( \{v_{i1}v_{i2}, v_{i1}v_{i3}, v_{i4}v_{i3}, v_{i4}v_{i2}\}\cup \{vv_{i1}, v_{i2}v, v v_{i4}, v_{i3}v \}\Bigl).\end{equation*}
We define a bijection $f: V(\AR{G}) \rightarrow \{1,2, \dots, n\}$ by 
\begin{equation}
f(u)=
    \begin{cases}
        n & \text{if } u=v\\
        2i-1 & \text{if } u=v_{i1} \\
        2i & \text{if } u=v_{i2} \\
        n-2i & \text{if } u=v_{i3} \\
        n+1-2i & \text{if } u=v_{i4}, \\
    \end{cases} \; \text{where} \; 1\leq i \leq k.
\end{equation}
Then for all $i \in \{1, \hdots, k\}$ we have 
\begin{center}
{\renewcommand{\arraystretch}{1.5}
\begin{tabular}{ c | c | c  }
Vertex, $u$ & $f(u)$ & $wt_f(u)$ \\
\hline
$v_{i1}$ & $2i-1$ & $n-(2i +(n-2i))=0$ \\ 
$v_{i2}$ & $2i$ & $((2i-1)+(n-2i+1))-n=0$ \\  
 $v_{i3}$ & $n-2i$ & $((2i-1)+(n-2i+1))-n=0$   \\
 $v_{i4}$ & $n-2i+1$ & $n-(2i +(n-2i))=0$ \\  
 $v$ & $n$ & $\sum\limits_{i=1}^{k} (2i +(n-2i)) -((2i-1)+(n-2i+1))=0$
\end{tabular}}
\end{center}
Hence, $f$ is a DDM labeling of the described orientation of $W_4^{(k)}$ for all $k \geq 1$. 

\end{proof} 
An illustration of Theorem~\ref{thmWindmillWheel} (when $k=3$) is given in  Figure~\ref{fig:DDMwheelwindmill_extendedtop}.


\begin{figure}[H]
\centering
\begin{tikzpicture}
[node distance=1.4cm, base node/.style={circle,draw,minimum size=25pt},scale=0.8]

\node[base node, very thick] (1) at (-7, 1.25) {1};
\node[base node, very thick] (2) at (-5, 1.75) {2};
\node[base node, very thick] (11) at (-3, 2) {11};
\node[base node, very thick] (12) at (-1, 2) {12};
\node[base node, very thick] (5) at (1, 2) {5};
\node[base node, very thick] (6) at (3, 2) {6};
\node[base node, very thick] (7) at (5, 1.75) {7};
\node[base node, very thick] (8) at (7, 1.25) {8};

\node[base node, very thick] (3) at (-3, -1) {3};
\node[base node, very thick] (4) at (-1, -1) {4};
\node[base node, very thick] (9) at (1, -1) {9};
\node[base node, very thick] (10) at (3, -1) {10};

\node[base node, very thick] (13) at (0, 0.5) {13};

\path[->, very thick] (1) edge (2);
\path[->, very thick] (12) edge (11);
\path[->, very thick] (8) edge (7);
\path[->, very thick] (5) edge (6);

\path[->, very thick] (12) edge[bend right=30] (2);
\path[->, very thick] (1) edge[bend left=30] (11);
\path[->, very thick] (8) edge[bend right=30] (6);
\path[->, very thick] (5) edge[bend left=30] (7);

\path[->, very thick] (2) edge (13);
\path[->, very thick] (6) edge (13);
\path[->, very thick] (7) edge (13);
\path[->, very thick] (11) edge (13);
\path[->, very thick] (13) edge (12);
\path[->, very thick] (13) edge (8);
\path[->, very thick] (13) edge (1);
\path[->, very thick] (13) edge (5);

\path[->, very thick] (13) edge (3);
\path[->, very thick] (13) edge (10);
\path[->, very thick] (4) edge (13);
\path[->, very thick] (9) edge (13);

\path[->, very thick] (3) edge (4);
\path[->, very thick] (10) edge [bend left=30] (4);
\path[->, very thick] (10) edge (9);
\path[->, very thick] (3) edge[bend right=30] (9);

\end{tikzpicture}
\caption{DDM labeling of $\ar{W}_4 ^{(3)}$}
\label{fig:DDMwheelwindmill_extendedtop}
\end{figure}

\section{Graph operations and DDMOGs}\label{operations}

We can use known DDMOGs to create new DDMOGs in various other ways. First, we discuss how to create disconnected DDMOGs using connected DDMOGs. In the final   section, we introduce a new way to create DDMOGs from smaller oriented graphs by using the labels on the graphs to create a new graph.

\subsection{A disconnected example}

The following theorem shows us that if we start with a collection of DDMOGs that are are balanced (and possibly one that is not balanced), it is fairly straightforward to construct a disconnected DDMOG that is a union of these DDMOGs using Lemma \ref{lem:shift}. However, if a union of DDMOGs happens to contain more than one component that is not balanced, it is not immediately clear if a DDM labeling will exist for this union.

 \begin{theorem}\label{thmDisconnect} Any oriented graph $\overrightarrow{G}=\bigcup\limits_{i=1}^{k} \overrightarrow{G_i}$ is DDMOG if $\overrightarrow{G_1}$ is any DDMOG and $\overrightarrow{G_2},\dots, \overrightarrow{G_k}$ are DDMOGs with $imb(\overrightarrow{G_i})=0$ for every $2 \leq i \leq k$. 
 \label{thm:discon} \end{theorem}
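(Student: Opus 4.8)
The plan is to build a single labeling of $\overrightarrow{G}$ by concatenating appropriately shifted copies of the individual DDM labelings, and then invoke Lemma~\ref{lem:shift} to see that the shifts do not disturb the weights on the balanced components.

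First I would set up notation: for each $i$ let $n_i = |V(\overrightarrow{G_i})|$, let $f_i : V(\overrightarrow{G_i}) \to \{1,\dots,n_i\}$ be a DDM labeling, put $n = \sum_{i=1}^{k} n_i$, and let $s_i = \sum_{j<i} n_j$ (so $s_1 = 0$). Define $f : V(\overrightarrow{G}) \to \{1,\dots,n\}$ by $f(v) = f_i(v) + s_i$ when $v \in V(\overrightarrow{G_i})$. Since each $f_i$ is a bijection onto $\{1,\dots,n_i\}$ and the intervals $\{s_i+1,\dots,s_i+n_i\}$ partition $\{1,\dots,n\}$, the map $f$ is a bijection onto $\{1,\dots,n\}$.

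Next I would check the weights. Because $\overrightarrow{G}$ is the disjoint union of the $\overrightarrow{G_i}$, for any $v \in V(\overrightarrow{G_i})$ we have $N^+_{\overrightarrow{G}}(v) = N^+_{\overrightarrow{G_i}}(v)$ and $N^-_{\overrightarrow{G}}(v) = N^-_{\overrightarrow{G_i}}(v)$, so $wt_f(v)$ computed in $\overrightarrow{G}$ equals the weight of $v$ in $\overrightarrow{G_i}$ under the restriction of $f$, which is $f_i$ shifted by the constant $s_i$. For $i = 1$ the shift is $0$, so $wt_f(v) = wt_{f_1}(v) = 0$. For $i \geq 2$, $\overrightarrow{G_i}$ is balanced, so Lemma~\ref{lem:shift} (applied with the constant $s_i$) gives $wt_f(v) = wt_{f_i}(v) = 0$. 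Hence every vertex of $\overrightarrow{G}$ has weight $0$ under the bijection $f$, so $f$ is a DDM labeling and $\overrightarrow{G}$ is a DDMOG.

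I do not expect a genuine obstacle here; the one point that needs care is that shifting labels by a constant preserves weights only on a balanced component, which is exactly why the hypothesis $imb(\overrightarrow{G_i}) = 0$ is imposed for $i \geq 2$ and why $\overrightarrow{G_1}$, left unshifted, may be an arbitrary DDMOG (shifting it would change each weight by $s_1 \cdot imb(v)$, which need not vanish). It is worth remarking, as the surrounding text already notes, that the same strategy breaks down as soon as two components are unbalanced, since then at most one of them can be assigned the trivial shift.
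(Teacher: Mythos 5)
Your proposal is correct and is essentially the paper's own proof: both define the labeling by shifting each $f_i$ by the cumulative offset $\sum_{j<i} n_j$ (leaving $\overrightarrow{G_1}$ unshifted) and then invoke Lemma~\ref{lem:shift} on the balanced components to conclude that all weights remain zero. No substantive differences to report.
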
 

Proof: Assume $\AR{G}_i$ has $n_i$ vertices and let $f_i$ be a DDM labeling associated with graph $\AR{G}_i$. Note that the number of vertices in $\AR{G}$ is $n=n_1+\dots+n_k$. Then, define the bijection $f:\AR{G} \rightarrow \{1,2, \dots, n\}$ as 
\[
f(u)=%
\begin{cases}
    f_1(u) & \text{if } u \in V(\AR{G_1})\\
    f_i(u)+(n_1+\dots+n_{i-1}) & \text{if } u \in V(\AR{G_i}), 2 \leq i \leq k. 
\end{cases}
\]
 
Every vertex in $\AR{G_1}$ still has weight 0 since the labels of $V(\AR{G_1})$ are the same in $f$ as in $f_1$. In graphs $\AR{G_2}, \dots, \AR{G_n}$ since each vertex has imbalance 0, the weight would remain zero by Lemma \ref{lem:shift}. Hence, the new labeling is also DDM. \qed

An example of Theorem \ref{thm:discon} is found in 
Figure~\ref{fig:chainWheelCone} ($W_4 \cup 2C_6(1,2))$. 
\\

\begin{figure}[H]
    \centering
     \begin{tikzpicture}  [node distance=1.4cm,base node/.style={circle,draw,minimum size=21pt}]
    
   \node[base node, very thick] (v1) at (-15,1) {1};
   \node[base node, very thick] (v2) at (-13.5,1) {2};
     \node[base node, very thick] (v3) at (-12,1) {3};
     \node[base node, very thick] (v4) at (-10.5,1) {4};
   \node[base node, very thick] (v5) at (-12.75,-0.75) {5};
     \path[->, very thick] (v1) edge  (v2); 
     \path[->, very thick] (v4) edge  (v3); 
      \path[->, very thick] (v5) edge  (v1); 
     \path[->, very thick] (v5) edge  (v4); 
      \path[->, very thick] (v2) edge  (v5); 
     \path[->, very thick] (v3) edge  (v5); 

     \path[->, very thick] (v1) edge [bend left=30] (v3);
      \path[->, very thick] (v4) edge [bend right=30] (v2);

       \node[base node, very thick] (v6) at (-8.75,4.5) {6};
   \node[base node, very thick] (v7) at (-11,3) {7};
   \node[base node, very thick] (v8) at (-9.5,3) {8};
     \node[base node, very thick] (v9) at (-8,3) {9};
     \node[base node, very thick] (v10) at (-6.5,3) {10};
   \node[base node, very thick] (v11) at (-8.75,1.75) {11};
        \path[->, very thick] (v1) edge [bend left=30] (v3);
      \path[->, very thick] (v4) edge [bend right=30] (v2);

     \path[->, very thick] (v7) edge  (v8); 
     \path[->, very thick] (v10) edge  (v9); 
      \path[->, very thick] (v11) edge  (v7); 
     \path[->, very thick] (v11) edge  (v10); 
      \path[->, very thick] (v8) edge  (v11); 
     \path[->, very thick] (v9) edge  (v11); 
        \path[->, very thick] (v7) edge [bend left=30] (v9);
      \path[->, very thick] (v10) edge [bend right=30] (v8);
 \path[->, very thick] (v6) edge [bend right=30] (v7); 
 \path[->, very thick] (v6) edge [bend left=30] (v10); 
   \path[->, very thick] (v8) edge  (v6); 
     \path[->, very thick] (v9) edge  (v6);

       \node[base node, very thick] (v12) at (-4.75,2.5) {12};
   \node[base node, very thick] (v13) at (-7,1) {13};
   \node[base node, very thick] (v14) at (-5.5,1) {14};
     \node[base node, very thick] (v15) at (-4,1) {15};
     \node[base node, very thick] (v16) at (-2.5,1) {16};
   \node[base node, very thick] (v17) at (-4.75,-0.75) {17};
 \path[->, very thick] (v12) edge [bend right=30] (v13); 
 \path[->, very thick] (v12) edge [bend left=30] (v16); 
   \path[->, very thick] (v14) edge  (v12); 
     \path[->, very thick] (v15) edge  (v12); 

     \path[->, very thick] (v13) edge  (v14); 
     \path[->, very thick] (v16) edge  (v15); 
      \path[->, very thick] (v17) edge  (v13); 
     \path[->, very thick] (v17) edge  (v16); 
      \path[->, very thick] (v14) edge  (v17); 
     \path[->, very thick] (v15) edge  (v17); 
        \path[->, very thick] (v13) edge [bend left=30] (v15);
      \path[->, very thick] (v16) edge [bend right=30] (v14);
 
      \end{tikzpicture}
        \caption{A disconnected DDMOG: $\ar{W_4} \cup 2\ar{C_6}(1,2)$} 
      
    \label{fig:chainWheelCone}
\end{figure}
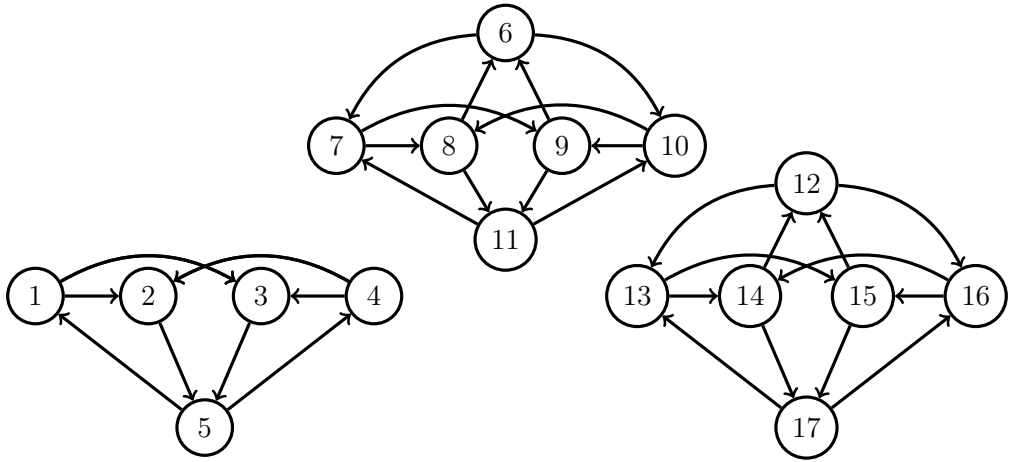

\subsection{Weighted Sums}

Here we introduce a new type of graph sum formed by taking two oriented graphs with labelings and adding additional edges based on the weights at each vertex. 
Let $V_f^i(\ar{G})=\{v \in V(\ar{G}) \, : \, wt_f(v)=i\}$ and $V_f^{-i}(\ar{G})=\{v \in V(G) \, : \,  wt_f(v)=-i\}$, for an oriented graph $\vec{G}$ with a labeling $f$.  Note that Definition \ref{def:whtsum} defines a new oriented graph, but does not define a labeling for it.

\begin{definition}\label{def:whtsum}
     
  Let $s \in \mathbb Z^+$ and let $\ar{G}$ be an oriented graph with vertex set $V(\ar{G}) = \{v_1,\dots, v_n\}$ and associated labeling function
      $g: V(\ar{G}) \rightarrow \{1,2,\dots, n\}$,
      where $g(v_i)=i$. If $\ar{H}$ is an oriented graph with labeling $h: V(H) \rightarrow \mathbb{Z}^+$, where  
       $\{ |wt_h(u)| : u \in V(\ar{H}) \} \subseteq \{0\} \cup \{ g(v)+s : v \in V(\ar{G})\}$ for some $s \in \mathbb Z$,  then the weighted sum of $\ar{G}$ and $\ar{H}$ shifted by $s$ denoted $\ar{G} \oplus_{wt_h}^s \ar{H}$
      is the oriented graph with vertex set $V(\AR{G} \oplus_{wt_h}^s \AR{H})=V(\AR{G}) \cup V(\AR{H})$ and $E(\AR{G} \oplus_{wt_h}^s \AR{H})=E(\AR{G}) \cup E(\AR{H}) \cup (\bigcup\limits_{i=1}^n (E^i \cup E^{-i}))$, where $E^i=\{v_i u:u \in V_h^{-i-s}(\AR{H})\}$ and $E^{-i}=\{uv_i:u \in V_h^{i+s}(\AR{H})\}$. 

      When the labeling $h$ and the shift $s$ are clear, we will  denote the weighted sum by $\AR{G} \oplus_{wt} \AR{H}$ instead of $\AR{G} \oplus_{wt_h}^s \AR{H}$. 
\end{definition}

As a first example of a new DDMOG constructed with the sum as defined in Definiton~\ref{def:whtsum}, we    interpret the construction given in Theorem \ref{thm:imb1} as a weighted sum. In particular, let $\overrightarrow{G}$ be a DDMOG with $imb(\overrightarrow{G})=1$ and DDM labeling $f$.  Let $h: V(\overrightarrow{G}) \rightarrow \{2, \dots, n+1\}$ be defined as $h(v)=f(v)+1$ for every $v \in V(\overrightarrow{G})$.  Note that because $imb(\AR{G})=1$, $\{|wt_h(v)| : v \in V(G)\}=\{0,1\}$. Then, $K_1 \oplus_{wt_h}^0 \overrightarrow{G}$ is a balanced DDMOG.  
The circulant can be seen as an example of this construction, $K_1 \oplus_{wt} \AR{W_4}$; see Figure~\ref{fig:bracelet1} for another example of this construction.

\begin{theorem}\label{thmWgt1} Let $\overrightarrow{G}$ be a DDMOG on $n$ vertices with DDM labeling $g$ and let $\overrightarrow{H}$ be an oriented graph on $m$ vertices with a bijective labeling $h: V(\AR{H}) \rightarrow \{n+1,\dots, n+m\}$ such that $k=\displaystyle \max_{v \in V(\AR{H})} \{|wt_h(v)|\} \leq n$ and for every $0 \leq i \leq k$, $\displaystyle \sum_{v \in V_h^i(\AR{H})} h(v)=\sum_{v \in V_h^{-i}(\AR{H})} h(v)$.  Then, $\AR{G} \oplus_{wt_h}^0 \AR{H}$ is a DDMOG on $n+m$ vertices. \end{theorem}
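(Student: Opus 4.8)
The plan is to write down the obvious candidate DDM labeling of $\AR{K} := \AR{G} \oplus_{wt_h}^{0} \AR{H}$ and verify directly that every vertex has weight zero. First, note the weighted sum is well defined with shift $s=0$: since $k \le n$ we have $\{\,|wt_h(u)| : u \in V(\AR{H})\,\} \subseteq \{0,1,\dots,n\} = \{0\} \cup g(V(\AR{G}))$, and since every added edge runs between $V(\AR{G})$ and $V(\AR{H})$ while $v_i u$ and $u v_i$ can never both be added (that would force $wt_h(u)=i$ and $wt_h(u)=-i$), the graph $\AR{K}$ is genuinely an oriented graph. As $g$ maps $V(\AR{G})$ bijectively onto $\{1,\dots,n\}$ (with $g(v_i)=i$, per the indexing convention of Definition~\ref{def:whtsum}) and $h$ maps $V(\AR{H})$ bijectively onto $\{n+1,\dots,n+m\}$, the function $f : V(\AR{K}) \to \{1,\dots,n+m\}$ defined by $f|_{V(\AR{G})} = g$ and $f|_{V(\AR{H})} = h$ is a bijection, and I claim it is a DDM labeling.

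I would then check the weight of each vertex of $\AR{G}$. Unwinding Definition~\ref{def:whtsum}, the vertex $v_i$ has $N_{\AR{K}}^{+}(v_i) = N_{\AR{G}}^{+}(v_i) \cup V_h^{i}(\AR{H})$ and $N_{\AR{K}}^{-}(v_i) = N_{\AR{G}}^{-}(v_i) \cup V_h^{-i}(\AR{H})$, with each union disjoint since the two vertex sets of the summands are disjoint. Hence
\[ wt_f(v_i) \;=\; wt_g(v_i) \;+\; \left( \sum_{u \in V_h^{i}(\AR{H})} h(u) \;-\; \sum_{u \in V_h^{-i}(\AR{H})} h(u) \right). \]
Here $wt_g(v_i) = 0$ because $g$ is a DDM labeling of $\AR{G}$, and the bracketed difference is $0$: it is $0$ by the balancing hypothesis when $1 \le i \le k$, and it is $0$ trivially when $k < i \le n$, since then $V_h^{i}(\AR{H})$ and $V_h^{-i}(\AR{H})$ are both empty by maximality of $k$. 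So $wt_f(v_i) = 0$.

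Next I would check a vertex $u \in V(\AR{H})$. The key observation is that $u$ acquires exactly one new incident edge in $\AR{K}$ when $wt_h(u) \ne 0$, and none when $wt_h(u) = 0$: if $wt_h(u) = i_0 > 0$ then $u \in V_h^{i_0}(\AR{H})$, so the unique new edge at $u$ is the out-edge $u v_{i_0} \in E^{-i_0}$; if $wt_h(u) = -i_0 < 0$, the unique new edge is the in-edge $v_{i_0} u \in E^{i_0}$; and if $wt_h(u) = 0$ then $u$ lies in no $V_h^{\pm i}(\AR{H})$ with $i \ge 1$. In the first two cases $i_0 = |wt_h(u)| \le k \le n$, so $v_{i_0}$ exists and $f(v_{i_0}) = g(v_{i_0}) = i_0$; and since $f$ agrees with $h$ on $V(\AR{H})$, the three cases give respectively $wt_f(u) = wt_h(u) - i_0 = 0$, $wt_f(u) = wt_h(u) + i_0 = 0$, and $wt_f(u) = wt_h(u) = 0$. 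Thus $wt_f(v) = 0$ for every $v \in V(\AR{K})$, so $f$ is a DDM labeling and $\AR{K}$ is a DDMOG on $n+m$ vertices.

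I do not expect a genuine obstacle: the construction is engineered so that each new edge carries label $i_0$, which exactly cancels the weight $\pm i_0$ of the associated vertex of $\AR{H}$, while at $v_i$ the effect is only to add a label-balanced bundle of edges to $V_h^{\pm i}(\AR{H})$, leaving $wt_g(v_i)=0$ intact. The only place requiring care is the index bookkeeping — keeping the signs of the superscripts on $E^{\pm i}$ matched to those on $V_h^{\pm i}(\AR{H})$, and separately dispatching the range $k < i \le n$ (not covered by the balancing hypothesis) via emptiness.
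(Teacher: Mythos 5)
Your proposal is correct and follows essentially the same route as the paper's proof: define $f$ by keeping $g$ on $V(\AR{G})$ and $h$ on $V(\AR{H})$, cancel each nonzero weight $\pm i_0$ in $\AR{H}$ against the single new edge to $v_{i_0}$, and use the balancing hypothesis (or emptiness when $k< i\leq n$) at the vertices of $\AR{G}$. Your explicit checks that the sum is well defined and that each vertex of $\AR{H}$ gains exactly one new edge are slightly more careful than the paper's write-up, but the argument is the same.
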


\begin{proof} Since $k\leq n$, we know that $\AR{G} \oplus_{wt_h}^0 \AR{H}$ is defined.  Let $f:V(\AR{G} \oplus_{wt_h}^0 \AR{H}) \rightarrow \{1, 2, \dots, n+m\}$ be defined as:
\[
f(u)=
    \begin{cases}
        g(u) & \text{if } u \in V(\AR{G})\\
        h(u)  & \text{if } u \in V(\AR{H}).
    \end{cases}
\]
We claim that $f$ is a DDM labeling of $\AR{G} \oplus_{wt_h}^0 \AR{H}$. First, note that $f$ will be a bijection with $\{1, 2, \dots, n+m\}$ as the vertices of $\AR{G}$ will have the labels $\{1,2,\dots, n\}$ and the vertices of $\AR{H}$ will have the labels $\{n+1, \dots, n+m\}$. Consider a vertex $u\in V_h^i(H)$ {for $0<i\leq k$}. In $\AR{G} \oplus_{wt_h}^0 \AR{H}$, the edge $uv$ where $v\in V(\AR{G})$ with $g(v)=i$ is present and thus, $wt_f(u)=wt_h(u)-i=i-i=0$. Similarly, if $u \in V_h^{-i}(\AR{H})$, then in $\AR{G} \oplus_{wt_h}^0 \AR{H}$ the edge $vu$ is present and thus $wt_f(u)=wt_h(u)+i=-i+i=0$. If $u \in V_h^0(H)$, then $wt_f(u)=wt_h(u)=0$ as no edges have been added to these vertices.  Likewise, if $v \in V(\AR{G})$ and $f(v)>k$, then $wt_f(v)=wt_g(v)=0$.  

Let $v \in V(\AR{G})$ with $g(v)=i$ for $1 \leq i \leq k$.
Then, 
$$\displaystyle wt_f(v)=wt_g(v)+\left(\sum_{v \in V_h^i(\AR{H})} h(v)-\sum_{v \in V_h^{-i}(\AR{H})} h(v)\right)=0+0=0.$$ 
Hence, $f$ is a DDM labeling of $\AR{G} \oplus_{wt_h}^0 \AR{H}$.
\end{proof} 
Examples of the construction presented in Theorem~\ref{thmWgt1} are seen in Figure~\ref{fig:bracelet1} and  Figure~\ref{fig:G7ornaments}.

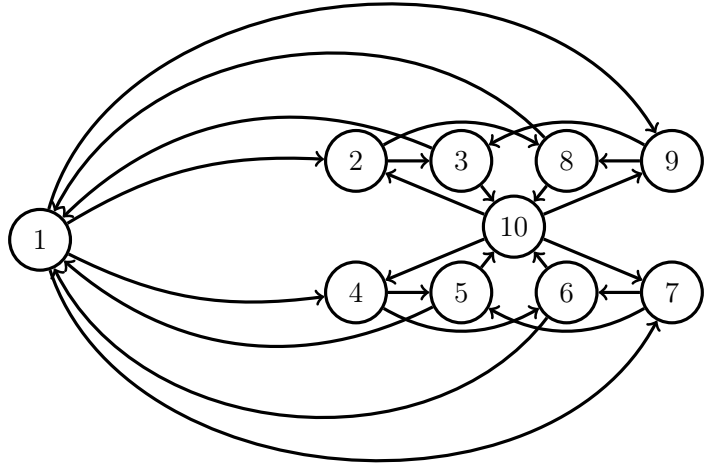
\begin{figure}[H]
\centering
\begin{tikzpicture}
[node distance=1.4cm, base node/.style={circle,draw,minimum size=23pt}, scale=0.7]

\node[base node, very thick] (2) at (-3, 1.5) {2};
 
\node[base node, very thick] (3) at (-1, 1.5) {3};
 
\node[base node, very thick] (8) at (1, 1.5) {8};
 
\node[base node, very thick] (9) at (3, 1.5) {9};

\node[base node, very thick] (10) at (0, .25) {10};

\node[base node, very thick] (1) at (-9, 0) {1};

\node[base node, very thick] (4) at (-3, -1) {4};
 
\node[base node, very thick] (5) at (-1, -1) {5};
 
\node[base node, very thick] (6) at (1, -1) {6};
 
\node[base node, very thick] (7) at (3, -1) {7};

\path[->, very thick] (10) edge (2);
\path[->, very thick] (10) edge (9);
\path[->, very thick] (10) edge (4);
\path[->, very thick] (10) edge (7);


\path[->, very thick] (2) edge (3);
\path[->, very thick] (2) edge[bend left=30] (8);
\path[->, very thick] (9) edge (8);
\path[->, very thick] (9) edge[bend right=30] (3);

\path[->, very thick] (4) edge (5);
\path[->, very thick] (4) edge[bend right=30] (6);
\path[->, very thick] (7) edge (6);
\path[->, very thick] (7) edge[bend left=30] (5);
 
 \path[->, very thick] (5) edge (10);
  \path[->, very thick] (6) edge (10);
   \path[->, very thick] (3) edge (10);
    \path[->, very thick] (8) edge (10);
 
\path[->, very thick] (1) edge[bend left=17] (2);
\path[->, very thick] (1) edge[bend right=17] (4);
\path[->, very thick] (1) edge[bend right=67] (7);
\path[->, very thick] (1) edge[bend left=67] (9);

\path[->, very thick] (3) edge[bend right=34] (1);
\path[->, very thick] (5) edge[bend left=34] (1);

\path[->, very thick] (8) edge[bend right=55] (1);
\path[->, very thick] (6) edge[bend left=58] (1);
\end{tikzpicture}
\caption{A DDM labeling of $K_1 \oplus_{wt}^0 \ar{W_4}^{(2)}$ 
}
\label{fig:bracelet1}
\end{figure}

 \begin{figure}[H]
        \centering
        \begin{tikzpicture}[node distance=0.5cm,base node/.style={circle,draw,minimum size=21pt}, ->, >=stealth, very thick, scale=0.76]
            \node[base node] (v1) at (90:3) {1};
            \node[base node] (v2) at ({90-360/7}:3) {2};
            \node[base node] (v3) at ({90-2*360/7}:3) {3};
            \node[base node] (v4) at ({90-3*360/7}:3) {4};
            \node[base node] (v5) at ({90-4*360/7}:3) {5};
            \node[base node] (v6) at ({90-5*360/7}:3) {6};
            \node[base node] (v7) at ({90-6*360/7}:3) {7};

              \node[base node] (8) at (-8,4) {8};
               \node[base node] (11) at (-10,3) {11};
                 \node[base node] (17) at (-6,3) {17};
              \node[base node] (14) at (-8,2) {14};
              \path (17) edge (8);
               \path (14) edge (17);
                \path (11) edge (14);
                 \path (8) edge  (11);

              \node[base node] (9) at (-11,0.75) {9};
               \node[base node] (12) at (-13,-0.5) {12};
                 \node[base node] (18) at (-9,-0.5) {18};
              \node[base node] (15) at (-11,-1.75) {15};
              \path (18) edge (9);
               \path (15) edge (18);
                \path (12) edge (15);
                 \path (9) edge (12);
              \node[base node] (10) at (-7.5,-3) {10};
               \node[base node] (13) at (-9.5,-4) {13};
                 \node[base node] (19) at (-5.5,-4) {19};
              \node[base node] (16) at (-7.5,-5) {16};
              \path (19) edge (10);
               \path (10) edge (13);
                \path (13) edge (16);
                 \path (16) edge (19);

                  \path (17) edge (v6);
                   \path (8) edge [bend left=39] (v6);
                    \path (v6) edge [bend left=18] (11);
                       \path (v6) edge (14);

                       \path (10) edge (v6);
                         \path (9) edge (v6);
                        \path (v6) edge  (15);
                         \path (v6) edge [bend left=10] (12);

                       \path (18) edge (v6);
                         \path (19) edge (v6);
                        \path (v6) edge [bend right=18] (13);
                         \path (v6) edge [bend left=45] (16);

            \path (v1) edge (v4);
            \path (v1) edge (v7);
            \path (v2) edge (v1);
            \path (v2) edge (v6);
            \path (v3) edge (v1);
            \path (v3) edge (v2);
            \path (v3) edge (v6);
            \path (v4) edge (v2);
            \path (v4) edge (v3);
            \path (v4) edge (v7);
            \path (v5) edge (v3);
            \path (v5) edge (v4);
            \path (v6) edge (v1);
            \path (v6) edge (v4);
            \path (v7) edge (v5);      
        \end{tikzpicture}        
        \caption{A DDM labeling of $\ar{R_{7}} \oplus_{{wt}}^0\ar{3C_4}$ where $\displaystyle\max_{v\in\ar{3C_4}} \{|wt_h(v)|\}= 6$ 
        } \label{fig:G7ornaments}
    \end{figure}

\vspace{3mm}

Figure~\ref{fig:G7ornaments} is constructed from a disconnected graph  with three components  and a connected DDMOG. We can generalize this to form a new DDMOG that is formed from a new DDMOG by taking the weighted sum of a DDMOG with a collection of $C_4$'s.

\begin{corollary}\label{cor:attachOrnaments}
 Let $\overrightarrow{G}$ be a DDMOG of order $n$. Then, for $1 \leq \ell \leq n/2$ there exists a labeling  $h$ on $\ell\overrightarrow{C_4}$ such that $\overrightarrow{G} \oplus_{{wt} _h}^0 \ell\overrightarrow{C_4}$ is a DDMOG.

\end{corollary}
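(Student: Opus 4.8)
The plan is to reduce the statement to Theorem~\ref{thmWgt1} by producing an explicit orientation and labeling $h$ of $\ell\overrightarrow{C_4}$ meeting its hypotheses; the construction will generalize the one drawn in Figure~\ref{fig:G7ornaments}. Set $n = |V(\overrightarrow{G})|$. For $1 \le t \le \ell$, I would give the $t$-th copy of $C_4$ the four labels $n+t,\ n+t+\ell,\ n+t+2\ell,\ n+t+3\ell$, and orient it as the directed $4$-cycle traversing these labels in exactly that cyclic order. Since every integer in $\{n+1,\dots,n+4\ell\}$ is uniquely of the form $n+t+j\ell$ with $1\le t\le\ell$ and $0\le j\le 3$, the labeling $h$ is a bijection onto $\{n+1,\dots,n+4\ell\}$.

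Next I would compute the weights. In any directed $4$-cycle $y_1\to y_2\to y_3\to y_4\to y_1$ one has $wt_h(y_1)=h(y_4)-h(y_2)$, $wt_h(y_2)=h(y_1)-h(y_3)$, $wt_h(y_3)=-wt_h(y_1)$, and $wt_h(y_4)=-wt_h(y_2)$; with the labels above this gives weights $2\ell,-2\ell,-2\ell,2\ell$ on the vertices labeled $n+t,\ n+t+\ell,\ n+t+2\ell,\ n+t+3\ell$, respectively. Hence $k:=\max_v|wt_h(v)|=2\ell$, and the hypothesis $\ell\le n/2$ is exactly the statement $k\le n$. For the level-sum hypothesis, note that $V_h^i(\ell\overrightarrow{C_4})=\varnothing$ for all $i\notin\{0,2\ell\}$, so those equalities are vacuous; and within the $t$-th copy the two labels of weight $2\ell$ sum to $(n+t)+(n+t+3\ell)$, the two of weight $-2\ell$ sum to $(n+t+\ell)+(n+t+2\ell)$, and these are equal. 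Summing over $t$ yields $\sum_{v\in V_h^{2\ell}}h(v)=\sum_{v\in V_h^{-2\ell}}h(v)$, so every hypothesis of Theorem~\ref{thmWgt1} holds, and I would conclude that $\overrightarrow{G}\oplus_{wt_h}^0\ell\overrightarrow{C_4}$ is a DDMOG on $n+4\ell$ vertices.

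The one point requiring care is the choice of label distribution and internal cyclic order: I want every nonzero weight of $\ell\overrightarrow{C_4}$ to land on the single value $\pm 2\ell$, which both pins $k$ at $2\ell$ and makes all but one level-sum condition vacuous, and I want each $C_4$ to be self-balanced at its weight level so the global equality follows by summing. The residue-class assignment achieves the first, while the self-balancing is forced by the antisymmetry $wt_h(y_3)=-wt_h(y_1)$, $wt_h(y_4)=-wt_h(y_2)$ together with $h(y_4)=h(y_2)+2\ell$ and $h(y_1)=h(y_3)-2\ell$. (Alternatively, one could place the labels into $\ell$ blocks of four consecutive integers, ordered so that every weight is $\pm 1$; this removes the restriction on $\ell$ entirely, but the version above is the one mirroring Figure~\ref{fig:G7ornaments}.)
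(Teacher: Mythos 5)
Your construction is exactly the paper's: the same partition of $\{n+1,\dots,n+4\ell\}$ into residue classes mod $\ell$, the same cyclic orientation of each $\overrightarrow{C_4}$, the same weight computation giving $\pm 2\ell$ with each copy self-balanced, and the same reduction to Theorem~\ref{thmWgt1}. The argument is correct and matches the paper's proof in every essential respect.
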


\begin{proof}
    Let $\overrightarrow{G}$ be a DDMOG with $n\geq 5$ vertices with DDM labeling $g:V(\overrightarrow{G}) \rightarrow \{1,2, \dots, n\}$. Let $1\leq \ell \leq n/2$, and let vertex $v$ in $\overrightarrow{G}$ have label $2\ell$. 

    Let $V_i:= \{v_{i1},v_{i2},v_{i3},v_{i4}\}$ and $E_i : = \{v_{i1}v_{i2}, v_{i2}v_{i3},v_{i3}v_{i4},  v_{i4}v_{i1} \}$ be the sets of  vertices and edges of the $i$th copy of $\overrightarrow{C_4}$, respectively. 

Let $\AR{H}$ denote the collection of $l$ copies of $\AR{C_4}$. Define a labeling $h: V(\AR{H}) \rightarrow \{n+1,\dots,n+4l\}$ by 
\[ h(u)= \begin{cases}
			n+i, & \text{if $u=v_{i1}, \ 1 \leq i \leq \ell$}\\
            n+\ell+i, & \text{if $u=v_{i2}, \ 1 \leq i \leq \ell$} \\
            n+2\ell+i, & \text{if $u=v_{i3}, \ 1 \leq i \leq \ell$} \\
            n+3\ell+i, & \text{if $u=v_{i4}, \ 1 \leq i \leq \ell$.} \\
            
		 \end{cases}\]
Note that for every $u \in V(\AR{H})$, $wt_h(u)=2\ell$ or $wt_h(u)=-2\ell$.

This gives $V_h^{2\ell}(H)=\{v_{11}, v_{21}, \dots, v_{\ell 1}, v_{14}, \dots, v_{\ell 4} \}$ and $V_h^{-2\ell}(H)=\{v_{12}, v_{22}, \dots, v_{l2}, v_{13}, \dots, v_{\ell 3} \}$ and thus $$\sum_{v \in V_h^{2\ell}(H)} h(v)=\sum_{i=1}^{\ell} n+i+n+3\ell+i
=\sum_{i=1}^{\ell} (2n+3\ell+2i),$$  $$\text{and } \; \sum_{v \in V_h^{-2\ell}(H)} h(v)=\sum_{i=1}^{\ell} n+\ell+i+n+2\ell+i=\sum_{i=1}^{\ell} (2n+3\ell+2i).$$

For $f$ defined as $f(v)= g(v)$ if $v \in \AR{G}$, and $f(v)=h(v)$ if $v \in \AR{H}$, we have that  the conditions of Theorem \ref{thmWgt1} are met, and we have a DDM labeling of $\overrightarrow{G} \oplus_{{wt}_h}^0 \ell\overrightarrow{C_4}$.
\end{proof}

Up to this point, our constructions have included weighted sums of the form $\AR{G} \oplus_{wt}^s \AR{H}$ where $s=0$. The next construction shows an example where the shift $s$ is nonzero.

\begin{theorem}\label{thmGlueShift} Let $\AR{G}$ be a DDMOG on $n$ vertices with $imb(\AR{G})=0$. Let $\AR{H}$ be an oriented graph with bijective labeling $h: V(\AR{H}) \rightarrow \{1,2,\dots, m\}$ where for every vertex $v \in V(\AR{H})$ it holds that either $m+1 \leq |wt_h(v)| \leq m+n$ or $wt_h(v)=0$, and
\[\displaystyle \sum_{v \in V_h^{i+m}(\AR{H})} h(v)=\sum_{v \in V_h^{-i-m}(\AR{H})} h(v)\]
 for each $1 \leq i \leq n$.
Then, $\AR{G}\oplus_{wt_h}^m\AR{H}$ is a DDMOG on $n+m$ vertices. 
\end{theorem}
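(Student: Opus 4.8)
The plan is to follow the proof of Theorem~\ref{thmWgt1} almost verbatim; the one genuinely new feature is that the shift $s=m$ is nonzero, so the global bijective labeling must relabel $\AR{G}$ as well as $\AR{H}$, and controlling what that relabeling does to $\AR{G}$'s internal weights is exactly the role of the hypothesis $imb(\AR{G})=0$ via Lemma~\ref{lem:shift}.

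First I would note that $\AR{G}\oplus_{wt_h}^m\AR{H}$ is defined: since $g(v_i)=i$, the set $\{g(v)+m:v\in V(\AR{G})\}$ is $\{m+1,\dots,m+n\}$, so the hypothesis that every $v\in V(\AR{H})$ has $m+1\le|wt_h(v)|\le m+n$ or $wt_h(v)=0$ is precisely the requirement $\{|wt_h(u)|:u\in V(\AR{H})\}\subseteq\{0\}\cup\{g(v)+m:v\in V(\AR{G})\}$ from Definition~\ref{def:whtsum}. Taking $\AR{G}$ and $\AR{H}$ to be vertex-disjoint, I would then define $f:V(\AR{G}\oplus_{wt_h}^m\AR{H})\to\{1,\dots,n+m\}$ by $f(u)=h(u)$ for $u\in V(\AR{H})$ and $f(u)=g(u)+m$ for $u\in V(\AR{G})$; since $h$ is a bijection onto $\{1,\dots,m\}$ and $g$ a bijection onto $\{1,\dots,n\}$, $f$ is a bijection onto $\{1,\dots,n+m\}$.

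The verification that $wt_f\equiv 0$ splits into the same cases as in Theorem~\ref{thmWgt1}. If $u\in V(\AR{H})$ with $wt_h(u)=0$, the sum adds no edge at $u$ and all $\AR{H}$-neighbors of $u$ keep their labels, so $wt_f(u)=wt_h(u)=0$. If $u\in V_h^{i+m}(\AR{H})$ with $1\le i\le n$, then the unique edge added at $u$ is $uv_i$ (from $E^{-i}$), putting $v_i$ into the out-neighborhood of $u$; since $f(v_i)=g(v_i)+m=i+m$, this gives $wt_f(u)=wt_h(u)-f(v_i)=0$, and the case $u\in V_h^{-i-m}(\AR{H})$ is the mirror image with $v_i$ joining the in-neighborhood of $u$. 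Finally, for $v_i\in V(\AR{G})$: applying Lemma~\ref{lem:shift} to the labeling $u\mapsto g(u)+m$ on the balanced graph $\AR{G}$ shows that the part of $wt_f(v_i)$ coming from edges internal to $\AR{G}$ equals $wt_g(v_i)=0$; the edges the sum adds at $v_i$ are exactly $\{v_iu:u\in V_h^{-i-m}(\AR{H})\}$ and $\{uv_i:u\in V_h^{i+m}(\AR{H})\}$, whose net contribution is $\sum_{u\in V_h^{i+m}(\AR{H})}h(u)-\sum_{u\in V_h^{-i-m}(\AR{H})}h(u)$, which is $0$ by hypothesis. Hence every vertex has weight $0$ under the bijection $f$, so $\AR{G}\oplus_{wt_h}^m\AR{H}$ is a DDMOG, and it has $|V(\AR{G})|+|V(\AR{H})|=n+m$ vertices.

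I do not anticipate a real obstacle here, only one point requiring care: in the last case one must correctly track that, after shifting $\AR{G}$'s labels by $m$, the ``connector'' vertex carrying the weight-$(i+m)$ edges is $v_i$ with $f(v_i)=i+m$, and one must keep the paper's convention straight that $N^+$ is the in-neighborhood, so that an added edge $uv_i$ contributes $-f(v_i)$ to $wt_f(u)$ but $+f(u)$ to $wt_f(v_i)$. It is also worth stating explicitly why $imb(\AR{G})=0$ is needed here but not in Theorem~\ref{thmWgt1}: there $\AR{G}$ retained its original labels, whereas here shifting them by $m$ would, absent balance, change each $wt(v_i)$ by $m\cdot imb(v_i)\neq 0$ in general and destroy the construction.
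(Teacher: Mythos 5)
Your proof is correct and follows essentially the same route as the paper's: the same bijection $f$ (with $h$ on $V(\AR{H})$ and $g+m$ on $V(\AR{G})$), the same appeal to Lemma~\ref{lem:shift} via $imb(\AR{G})=0$ to handle the shifted labels inside $\AR{G}$, and the same case analysis on added edges from $E^{i}$ and $E^{-i}$. Your closing remark explaining why balance of $\AR{G}$ is needed here but not in Theorem~\ref{thmWgt1} is a nice clarification, and your index bookkeeping is in fact slightly cleaner than the paper's.
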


\begin{proof} 

Let $g$ be a DDM labeling on $\AR{G}$ with $n$ vertices. Let $\ar{H}$ be an oriented graph with labeling $h : V(\ar{H}) \rightarrow \{1,2, \hdots, m\}$.  
Let $f:V(\AR{G} \oplus_{wt_{h}}^m \AR{H}) \rightarrow \{1, 2, \dots, n+m\}$  be defined as: \begin{center}
$f(u)=
    \begin{cases}
        h(u) & \text{if } u \in V(\AR{H})\\
        g(u) + m & \text{if } u \in V(\AR{G}).
    \end{cases}$. \end{center}
We claim that $f$ is a DDM labeling of $\AR{G} \oplus_{wt_h}^m \AR{H}$.

 First, note that $f$ will be a bijection since the $m$ vertices of $\AR{H}$ will receive the labels $\{1,2,\dots,m\}$ and the $n$ vertices of $\AR{G}$ will receive the labels $\{m+1,m+2,\dots,m+n\}$. 
Let $g_m(u)=g(u)+m$ and recall that by assumption we have $imb(\AR{G})=0$, thus by Lemma \ref{lem:shift} it holds that $wt_{g_m}(u)=0$ for all $u \in V({\AR{G}})$.

By definition we have $E(G \oplus_{wt_h}^m H)=E(G) \cup E(H) \cup ( \bigcup\limits_{i=1}^n (E^i \cup E^{-i}))$, where  $E^{i}=\{v_i u:u \in V_h^{-i-m}(H)\}$ and $E^{-i}=\{uv_i:u \in V_h^{i+m}(H)\}$. 

Note that for $v_i \in V(\AR{G})$ with $g_m(v)=i$, where  $m+1 \leq i \leq m+n$, we have that $wt_f(v_i)=wt_{g_m}(v)+ \left( \displaystyle \sum_{u\in V_h^{i+m}(H)} f(u)- \sum_{u\in V_h^{-i-m}(H)}f(u) \right) =0$.

If $u \in V({H})$ with $wt_{h}(u)=0$, then $wt_f(u)=0$ as no new edges are added to these vertices.
  If  $u \in V_h^{i+m}(H)$, then $u \in V({H})$ with $wt_h(u)=i+m$  where  $1 \leq i \leq n$, and the edge $uv_i$ where $v_i \in V(\AR{G})$ with $f(v_i)=i+m$ is present in $\AR{G} \oplus_{wt_h}^m \AR{H}$ and thus, $wt_f(v)=wt_h(v)-(i+m)=(i+m)-(i+m)=0$.  
  Similarly, if $u\in V_h^{-i-m}(H)$ where $1 \leq i \leq n$, then in $\AR{G} \oplus_{wt_h}^m \AR{H}$ the edge  $v_iu$ is present with $f(v_i)=i+m$ and thus $wt_f(u)=wt_h(u)+i+m=(-i-m)+(i+m)=0$. Since every vertex in $\AR{G} \oplus_{wt_h}^m \AR{H}$ has weight zero, we have a DDM labeling of $\AR{G} \oplus_{wt_h}^m \AR{H}$.

  \end{proof}

Figure~\ref{fig:example3_final} gives us an example of the construction presented in Theorem~\ref{thmGlueShift}.

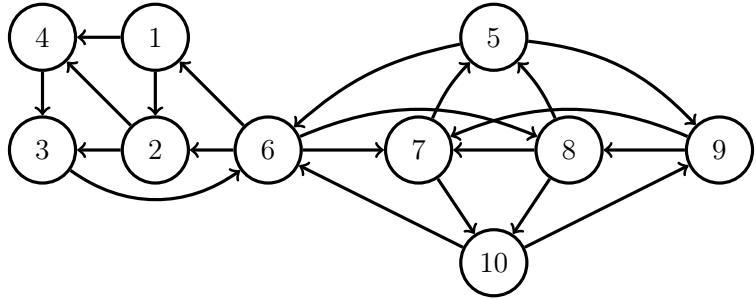
\begin{figure}[H]
\centering
\begin{tikzpicture}
[node distance=1.4cm, base node/.style={circle,draw,minimum size=25pt}]

\node[base node, very thick] (5) at (1, 4) {5};
\node[base node, very thick] (6) at (-2, 2.5) {6};
\node[base node, very thick] (7) at (0, 2.5) {7};
\node[base node, very thick] (8) at (2, 2.5) {8};
\node[base node, very thick] (9) at (4, 2.5) {9};

\node[base node, very thick] (1) at (-3.5, 4) {1};
\node[base node, very thick] (2) at (-3.5, 2.5) {2};
\node[base node, very thick] (3) at (-5, 2.5) {3};
\node[base node, very thick] (4) at (-5, 4) {4};

\node[base node, very thick] (10) at (1, 1) {10};

\path[->, very thick] (5) edge [bend right=15] (6);
\path[->, very thick] (7) edge [bend left=10] (5);
\path[->, very thick] (8) edge [bend right=10] (5);

\path[->, very thick] (6) edge (7);
\path[->, very thick] (6) edge[bend left=23] (8); 
\path[->, very thick] (8) edge (7);
\path[->, very thick] (9) edge[bend right=23] (7); 
\path[->, very thick] (9) edge (8);
\path[->, very thick] (5) edge[bend left=20] (9);  

\path[->, very thick] (6) edge (1);
\path[->, very thick] (6) edge (2);
\path[->, very thick] (3) edge[bend right=36] (6); 

\path[->, very thick] (1) edge (2);
\path[->, very thick] (1) edge (4);
\path[->, very thick] (2) edge (3);
\path[->, very thick] (2) edge (4);
\path[->, very thick] (4) edge (3);

\path[->, very thick] (7) edge (10);
\path[->, very thick] (8) edge (10);
\path[->, very thick] (10) edge (9);
\path[->, very thick] (10) edge (6);


\end{tikzpicture}
\caption{DDMOG labeling of $\ar{C_4} \oplus_{wt}^4 \ar{C_6}(1,2)$%
}
\label{fig:example3_final}
\end{figure}

\section{Conclusion}\label{sec13}

In this paper, we expanded on what is known about DDM labelings on oriented graphs.  We found that DDM labelings of DDMOGs exhibit interesting properties  (Lemma \ref{lem:balancelabelsum}) and that linear algebra is fruitful in proving these properties.  We demonstrated that graph families, such as windmills of wheels, can be investigated to determine if they are DDMOG or not.  We also showed that there are a variety of ways to construct new DDMOGs from previously existing ones.  One theme that emerged is that shifting DDM labelings of oriented graphs that have imbalance 0 is particularly helpful during these constructions.  Furthermore, previously known operations like vertex coalescence are also helpful when growing new DDMOGs from old ones.  So is combining graphs in familiar yet distinct ways, as exemplified by the  weighted sum in Definition \ref{def:whtsum}. 

With such a variety of results, there is a rich mix of classical and novel pathways forward for exploring DDMOGs.  As one such pathway, recall that DDMOGs exist for arbitrarily large order $n\geq 5$.  On the other hand, not every graph is DDMO.  The reader can check for example that no orientation of the cycle $C_n$ is DDMO.  This begs the question:  Which graphs are DDMO and which are not? Can we find properties which distinguish DDMOGs and non-DDMOGs? Furthermore, although most of the DDMOGs in this paper have been connected, disconnected DDMOGs are of interest as well.  For example, is it possible for a union of non-DDMOGs to be DDMO?

Another particularly intriguing direction forward (pun intended) involves studying DDM labelings of directed graphs (digraphs).  Because an oriented graph is a digraph, each oriented graph result in this paper extends to digraphs, as does the terminology.  For example, we call a digraph $D$ a \textit{difference distance magic directed graph (DDMDG)} if $D$ exhibits a DDM labeling.  Despite similarities that exist between DDMOGs and DDMDGs,  there are innate differences too.  For example, whereas there exist graphs like cycles that are not DDMO, every graph emits at least one direction that yields a DDMGD.  To see this, make every edge bidirectional and note that every labeling then becomes a DDM labeling.  Hence, one open area of exploration involves understanding additional differences between DDMOGs and DDMDGs. 








\bibliographystyle{abbrv}
\bibliography{magic}

\end{document}